\DeclareMathOperator{\AGL}{AGL}
\DeclareMathOperator{\Aut}{Aut}
\DeclareMathOperator{\Rep}{Rep}
\DeclareMathOperator{\Diff}{Diff}
\DeclareMathOperator{\PSL}{PSL}
\DeclareMathOperator{\PGaL}{P\Gamma L}
\DeclareMathOperator{\PGL}{PGL}
\DeclareMathOperator{\GaL}{\Gamma L}
\DeclareMathOperator{\Sp}{Sp}
\DeclareMathOperator{\PSU}{PSU}
\DeclareMathOperator{\supp}{supp}
\DeclareMathOperator{\pt}{\mathcal{P}}
\DeclareMathOperator{\B}{\mathcal{B}}
\DeclareMathOperator{\Ree}{Ree}
\DeclareMathOperator{\z}{{\bf{0}}}
\DeclareMathOperator{\1}{{\bf{1}}}
\DeclareMathOperator{\GL}{GL}
\DeclareMathOperator{\wt}{wt}
\DeclareMathOperator{\base}{\mathfrak{B}}
\DeclareMathOperator{\topg}{\mathfrak{L}}
\newtheorem{theorem}{Theorem}[section]
\newtheorem{proposition}[theorem]{Proposition}
\newtheorem{lemma}[theorem]{Lemma}
\newtheorem{corollary}[theorem]{Corollary}
\theoremstyle{definition}
\newtheorem{remark}[theorem]{Remark}
\newtheorem{definition}[theorem]{Definition}
\newtheorem{example}[theorem]{Example}
\newtheorem{hypothesis}[theorem]{Hypothesis}
\newcommand{\sym}[1]{{\sf Sym}\,#1}
\renewcommand{\wr}{\,{\sf wr}\,}
\newcommand{\F}{\mathbb F}
\renewcommand{\leq}{\leqslant}
\renewcommand{\geq}{\geqslant}
\numberwithin{equation}{section}
\begin{document}

\title[Classification of a family of completely transitive codes.]
      {Classification of a family of\\ Completely Transitive Codes}
\author{Neil I. Gillespie, Michael Giudici and Cheryl E. Praeger}
\address{[Gillespie, Giudici and Praeger] Centre for the Mathematics of Symmetry and Computation\\
School of Mathematics and Statistics\\
The University of Western Australia\\
35 Stirling Highway, Crawley\\
Western Australia 6009}

\email{neil.gillespie@graduate.uwa.edu.au, michael.giudici@uwa.edu.au,
  cheryl.praeger@uwa.edu.au}

\begin{abstract}
The \emph{completely regular codes} in Hamming graphs have a high
degree of combinatorial symmetry and have attracted a lot of interest
since their introduction in 1973 by Delsarte.  This paper studies the
subfamily of \emph{completely transitive codes}, those in which an
automorphism group is transitive on each part of the distance
partition.  This family is a natural generalisation of the binary
completely transitive codes introduced by Sol{\'e} in 1990.  We take
the first step towards a classification of these codes, determining
those for which the automorphism group is faithful on entries.  
\end{abstract}

\thanks{{\it Date:} draft typeset \today\\
{\it 2000 Mathematics Subject Classification:} 05C25, 20B25, 94B05.\\
{\it Key words and phrases: completely
  transitive codes, completely regular codes, automorphism groups, Hamming graphs.} 
The first author is supported by an Australian Postgraduate Award and
by the Australian Research Council Federation Fellowship FF0776186 of
the third author.}

\maketitle

\section{Introduction}\label{intro}

Completely regular codes have been studied extensively ever since
Delsarte \cite{delsarte} introduced them as a generalisation of
perfect codes in 1973.  Not only are these codes of interest to coding
theorists as they possess a high degree of combinatorial symmetry,
but, due to a result by Brouwer et al. \cite[p.353]{distreg}, they are also
the building blocks of certain types of distance regular graphs.  At
present there is no general classification of completely regular
codes.  However, certain families of completely regular codes have
been characterised.  For example, the first and third authors proved
that certain binary completely regular codes are uniquely determined
by their length and \emph{minimum distance} \cite{hadpap,nord}.
Borges et al. have classified all linear completely regular codes that have 
\emph{covering radius $\rho=2$} and an \emph{antipodal} dual code
\cite{brz1}, showing that these codes are extensions of linear
completely regular codes with covering radius $\rho=1$.  They also
classified this later family of codes, and proved that these codes are in fact  
\emph{coset-completely transitive}, a family of linear completely
regular codes that were first introduced in the binary case by
Sol{\'e} \cite{sole} and then over an arbitrary finite field by the
second and third authors \cite{giupra}.  In \cite{nonexist2}, Borges
et al. classified binary coset-completely transitive codes with
minimum distance at least $9$, showing that the \emph{binary
repetition code} (see Definition \ref{repcode}) is the unique code
in this family.        

In their paper \cite{giupra}, the second and third authors also
generalised coset-completely transitive codes by introducing
\emph{completely transitive codes}, which are defined for not
necessarily linear codes over an arbitrary alphabet, and are also
completely regular.  There exist completely transitive codes that are not
coset-completely transitive.  For example, the first and third authors
proved that certain Hadamard codes \cite{hadpap} and the
Nordstrom-Robinson codes \cite{nord} are completely transitive, but as
they are non-linear, cannot be coset-completely transitive; also the
repetition code of length $3$ over a finite field $\F_q$ for $q\geq
9$ is an example of a linear completely transitive code that is not
coset-completely transitive.  In this paper we begin the
classification of this class of completely regular codes.  To do this
we consider codes as subsets of the vertex set of the Hamming graph
$H(m,q)$, which is a natural setting to study codes of length $m$ over
a finite alphabet $Q$ of size $q$.  The automorphism group of
$\Gamma=H(m,q)$, denoted by $\Aut(\Gamma)$, is isomorphic to $S_q\wr S_m$, and via the
homomorphism given in (\ref{mu}), has an action on the entries of
codewords with kernel $\base\cong S_q^m$.  Given any code $C$ in
$H(m,q)$ with covering radius $\rho$, we define the \emph{distance
  partition of $C$}, $\{C_0=C,C_1,\ldots,C_\rho\}$, which is a
partition of the vertex set of $H(m,q)$ (see Section \ref{cinh}).  If
there exists $X\leq\Aut(\Gamma)$ such that $C_i$ is an $X$-orbit for
$i=0,\ldots,\rho$, we say $C$ is \emph{$X$-completely transitive}, 
or simply \emph{completely transitive}, and we prove the following.  

\begin{theorem}\label{maink=1} Let $C$ be a code in $H(m,q)$ with
  minimum distance $\delta$ such that $|C|\geq 2$ and $\delta\geq
  5$.  Then $C$ is $X$-completely transitive with $X\cap\base=1$ if
  and only if $q=2$, $C$ is equivalent to the binary repetition
  code, and $X\cong S_m$.      
\end{theorem}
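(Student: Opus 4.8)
The plan is to prove the nontrivial direction: assume $C$ is $X$-completely transitive with $X \cap \base = 1$, $|C| \ge 2$, and minimum distance $\delta \ge 5$, and deduce $q = 2$, $C$ is equivalent to the binary repetition code, and $X \cong S_m$. The condition $X \cap \base = 1$ means $X$ embeds, via the homomorphism $\mu$ from (\ref{mu}), into the top group $S_m$ acting on the $m$ entries; so $X$ is (isomorphic to) a subgroup of $S_m$ acting on $\{1,\dots,m\}$, and every element of $X$ is determined by its entry-permutation together with the accompanying tuple of coordinate-alphabet permutations — but since $X \cap \base = 1$, in fact $X \hookrightarrow S_m$ and we may think of $X$ as permuting entries while also doing "something" to the alphabet symbols in a way rigidly tied to the entry permutation.

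First I would pin down the size of $C$. Since $X$ is transitive on $C = C_0$ and $X \le S_m$ (as an abstract group, via $\mu$), we get $|C| \mid |S_m| = m!$, and more usefully $|C| = |X : X_c|$ for a codeword $c$. The key leverage is that $X$ also acts transitively on $C_1$, the set of vertices at distance exactly $1$ from $C$; standard counting for completely regular codes gives $|C_1| = |C| \cdot m(q-1)/ \text{(something)}$, or more precisely each codeword has a fixed number of neighbours, and one relates $|C_1|$ to $|C|$ via the intersection array. Combining $|C_1| \mid |X| \mid m!$ with the combinatorial identity relating $|C_1|$, $|C|$, $m$ and $q$ should force $q$ to be small — ideally $q = 2$. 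Here the hypothesis $\delta \ge 5$ is doing real work: it guarantees $C_1$ and $C_2$ behave simply (the balls of radius $2$ around distinct codewords are disjoint), so the distance partition has at least three parts and the counting is clean; it also likely feeds into bounding $|X|$ from below by forcing $X$ to move enough entries.

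Next, having reduced to $q = 2$, I would show $X$ acts on the binary Hamming graph $H(m,2)$ with $X \le S_m$ and $X$ transitive on $C$. A group $X \le S_m$ acting on $H(m,2) = $ the $m$-cube has orbits on vertices that are unions of "weight classes relative to some reference structure"; the fact that $X$ fixes $C$ setwise and acts transitively on it, combined with $\delta \ge 5 > m/2$ forcing $|C| = 2$ (since two codewords at distance $\ge 5$ in a binary code, if $|C| \ge 3$, push $m$ up and then complete transitivity on $C_1, C_2$ becomes impossible for a group inside $S_m$) — this is the crux. Once $|C| = 2$ with the two codewords at distance $\delta = m$ (complementary), $C$ is exactly the repetition code up to equivalence, and $X$ must be transitive on the $2$ codewords and on $C_1$ (vectors of weight $1$ or $m-1$ relative to one codeword), which forces $X$ to be $2$-transitive-ish on entries; a short argument rules out proper transitive subgroups failing complete transitivity on deeper parts and yields $X \cong S_m$.

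The main obstacle I anticipate is the reduction $|C| = 2$: showing that no binary code with $|C| \ge 3$, $\delta \ge 5$ can be $X$-completely transitive with $X \le S_m$. The difficulty is that $X \cap \base = 1$ allows $X$ to permute alphabet symbols in an entry-dependent way (e.g. via the "diagonal" twisting), so one cannot naively say $X$ fixes the zero codeword; one must carefully use the transitivity of $X$ on $C$ together with the structure of $X$ as a subgroup of $S_q \wr S_m$ with trivial base-intersection to bound orbit sizes on $C$, $C_1$, $C_2$ against $|X| \le m!$, and derive a contradiction unless $|C| = 2$ and $q = 2$. I would handle this via a careful case analysis on $\delta$ and on the minimal degree / base size of $X$ as a permutation group on entries, using that $X$ must move "many" coordinates to separate the codewords, which forces $|X|$ large but complete transitivity then over-constrains the orbit structure on $C_1 \cup C_2$.
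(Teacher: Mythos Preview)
Your plan has a genuine gap at the crux step. You write ``$\delta \ge 5 > m/2$ forcing $|C| = 2$'', but nothing in the hypotheses gives $\delta > m/2$: for instance $m = 100$, $\delta = 5$ is perfectly consistent with the statement, and indeed Lemma~\ref{boundq} shows that in the binary case one must actually have $\delta \le m/2$. So the reduction to $|C| = 2$ cannot go through the inequality you invoke. More broadly, the orbit-counting on $C_1$, $C_2$ against $|X| \le m!$ that you outline is too coarse on its own: the bound the paper extracts is $q^m/(m+1) \le |X|$ (Lemma~\ref{restrictq}), which by itself only gives $q \le m-2$, far from $q = 2$.

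What your proposal is missing is the structural input that does the real work. From $(X,2)$-neighbour transitivity and $\delta \ge 5$ one proves (Proposition~\ref{ihom}, Corollary~\ref{2transM}) that $X_\alpha$ is $2$-homogeneous on entries and hence $X$ is \emph{$2$-transitive} on $M$. This is the pivot: once $X$ is $2$-transitive and does not contain $A_m$, Mar{\'o}ti's bound $|X| \le 3^m$ (or $2^m$ for $m > 24$) combined with $q^m/(m+1) \le |X|$ forces $q \in \{2,3\}$, and then one must invoke the classification of finite $2$-transitive groups, splitting into affine and almost simple type and eliminating each candidate socle by a case analysis (Propositions~\ref{notaffine} and~\ref{notas}) using known nonexistence of certain completely regular codes. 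There is no known elementary route that avoids this classification; your proposed ``case analysis on $\delta$ and on the minimal degree / base size of $X$'' does not supply the needed leverage because nothing short of $2$-transitivity pins $X$ down tightly enough to contradict the orbit-size constraints. The cases $X \supseteq A_m$ and $\delta = m$ are handled separately (Propositions~\ref{xcongsm} and~\ref{d=m}) and are comparatively easy, but the generic case genuinely requires the $2$-transitive classification.
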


In Section \ref{secdef} we introduce the necessary definitions and
preliminary results required for this paper.  For the remainder of the
paper we consider $X$-completely transitive codes with
$X\cap\base=1$.  In particular in Section \ref{secamsm}, we consider
such codes with $X\cong A_m$ or $S_m$, and with $\delta=m$.  We deduce
that for $X$-completely transitive codes with $\delta\geq 5$ and
$X\cap\base=1$, the group $X$ has a $2$-transitive action on entries,
and therefore is of affine or almost simple type, and in Section
\ref{affinetype} and Section \ref{sk=1blkfaith} we consider the
respective cases.  Finally in Section \ref{proof} we prove Theorem
\ref{maink=1}.

\section{Definitions and Preliminaries}\label{secdef}

\subsection{Codes in Hamming Graphs}\label{cinh} 
The \emph{Hamming graph $H(m,q)$} is the graph $\Gamma$ with vertex
set $V(\Gamma)$, the set of $m$-tuples with entries from an
\emph{alphabet} $Q$ of size $q$, and an edge exists between two 
vertices if and only if they differ in precisely one entry.
Throughout we assume that $m, q\geq 2$.  Any code of length $m$ over an
alphabet $Q$ of size $q$ can be embedded as a subset of $V(\Gamma)$.  
The automorphism group of $\Gamma$, which we
denote by $\Aut(\Gamma)$, is the semi-direct product
$\base\rtimes\topg$ where $\base\cong S_q^m$ and $\topg\cong S_m$, see
\cite[Thm. 9.2.1]{distreg}.  Let $g=(g_1,\ldots, g_m)\in \base$,
$\sigma\in \topg$ and $\alpha=(\alpha_1,\ldots,\alpha_m)\in V(\Gamma)$. Then
$g\sigma$ acts on $\alpha$ in the following way:         
\begin{equation}\label{acteqn}
  \alpha^{g\sigma}=(\alpha_{1^{\sigma^{-1}}}^{g_{1^{\sigma^{-1}}}},\ldots,\alpha_{m^{\sigma^{-1}}}^{g_{m^{\sigma^{-1}}}})\end{equation}
Let $M=\{1,\ldots,m\}$, and view $M$ as the set of vertex entries of
$H(m,q)$.  Let $0$ denote a distinguished element of the alphabet
$Q$. For $\alpha\in V(\Gamma)$, the \emph{support of $\alpha$} is the set
$\supp(\alpha)=\{i\in M\,:\,\alpha_i\neq 0\}$.  The \emph{weight of
  $\alpha$} is defined as $\wt(\alpha)=|\supp(\alpha)|$.    For any
$a\in Q\backslash\{0\}$ we use the notation $(a^k,0^{m-k})$ to denote
the vertex in $V(\Gamma)$ that has $a$ in the first $k$ entries, and
$0$ in the remaining entries, and if $k=0$ we denote the vertex by
${\bf{0}}$.  

\begin{lemma}\label{supplem} Let $\alpha={\bf{0}}$ and
  $x=(g_1,\ldots,g_m)\sigma\in\Aut(\Gamma)_\alpha$.  Then
  $\supp(\beta^x)=\supp(\beta)^\sigma$ for all $\beta\in V(\Gamma)$.     
\end{lemma}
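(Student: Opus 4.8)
The plan is to unwind the definition of the action (\ref{acteqn}) directly. Recall that $\alpha = \mathbf{0}$ means $\alpha_i = 0$ for every $i \in M$, and that $x = (g_1,\ldots,g_m)\sigma$ lies in the stabiliser $\Aut(\Gamma)_\alpha$, i.e. $\alpha^x = \alpha$. Applying (\ref{acteqn}) to $\alpha$, the $i$-th entry of $\alpha^x$ is $\alpha_{i^{\sigma^{-1}}}^{g_{i^{\sigma^{-1}}}} = 0^{g_{i^{\sigma^{-1}}}}$, and since $\alpha^x = \alpha = \mathbf{0}$ this must equal $0$ for all $i$. As $\sigma$ permutes $M$, this says $0^{g_j} = 0$ for every $j \in M$; that is, each $g_j$ fixes the distinguished alphabet element $0$.

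With that observation in hand, fix an arbitrary $\beta = (\beta_1,\ldots,\beta_m) \in V(\Gamma)$ and apply (\ref{acteqn}) again: the $i$-th entry of $\beta^x$ is $\beta_{i^{\sigma^{-1}}}^{g_{i^{\sigma^{-1}}}}$. I would then argue the equivalence $i \in \supp(\beta^x) \iff i \in \supp(\beta)^\sigma$ by a short chain: $i \in \supp(\beta^x)$ iff $\beta_{i^{\sigma^{-1}}}^{g_{i^{\sigma^{-1}}}} \neq 0$; since $g_{i^{\sigma^{-1}}}$ is a permutation of $Q$ fixing $0$, it maps nonzero elements to nonzero elements and $0$ to $0$, so this holds iff $\beta_{i^{\sigma^{-1}}} \neq 0$, i.e.\ iff $i^{\sigma^{-1}} \in \supp(\beta)$, i.e.\ iff $i \in \supp(\beta)^\sigma$. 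Since $i$ was arbitrary this gives $\supp(\beta^x) = \supp(\beta)^\sigma$ as required.

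There is essentially no obstacle here; the only point that needs care is the very first step, namely correctly extracting from $\alpha^x = \mathbf{0}$ the fact that every coordinate permutation $g_j$ fixes $0$ — one must be slightly careful with the $\sigma^{-1}$ index-shuffling in (\ref{acteqn}), but since $\sigma$ is a bijection of $M$ the conclusion "$0^{g_j}=0$ for all $j$" is immediate. The remainder is a routine verification that a $0$-fixing permutation of the alphabet sends support to support, so I would present the argument compactly in the two steps above.
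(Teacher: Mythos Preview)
Your proof is correct and follows essentially the same approach as the paper: use that $x$ stabilises $\mathbf{0}$ to deduce each $g_j$ fixes $0$, then apply (\ref{acteqn}) to see that the $i$th entry of $\beta^x$ is nonzero if and only if $\beta_{i^{\sigma^{-1}}}$ is nonzero. The paper's version is terser (it simply asserts that each $g_i$ fixes $0$), but the argument is the same.
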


\begin{proof}  Since each $g_i$ fixes $0$ and $\sigma$ permutes coordinates, 
the $i$th entry of $\beta^x$ is non-zero if and only if the $i^{\sigma^{-1}}$
entry of $\beta$ is non-zero.  Thus the result follows.  
\end{proof}

\noindent For all pairs of vertices $\alpha,\beta\in V(\Gamma)$, the
\emph{Hamming distance} between $\alpha$ and $\beta$, denoted by
$d(\alpha,\beta)$, is defined to be the number of entries in which the
two vertices differ.  This is equal to the length of the shortest path
in the graph between $\alpha$ and $\beta$.  We let $\Gamma_k(\alpha)$ denote the set of
vertices in $V(\Gamma)$ that are at distance $k$ from $\alpha$.        

Let $C$ be a code in $H(m,q)$.  The \emph{minimum distance $\delta$ of C} 
is the smallest distance between distinct codewords of $C$.
For any vertex $\gamma\in V(\Gamma)$, the \emph{distance of
  $\gamma$ from $C$} is equal to $d(\gamma,C)=\min\{d(\gamma,\beta)\,:\,\beta\in C\}.$  The
\emph{covering radius $\rho$ of $C$} is the maximum distance any vertex in $H(m,q)$ is from $C$.  
We let $C_i$ denote the set of vertices that are distance $i$ from $C$, and deduce,
for $i\leq \lfloor (\delta-1)/2\rfloor$, that 
$C_i$ is the disjoint union of $\Gamma_i(\alpha)$ as $\alpha$ varies 
over $C$.  Furthermore, $\{C=C_0,C_1,\ldots,C_\rho\}$ forms a
partition of $V(\Gamma)$, called the \emph{distance
  partition of $C$}.  The \emph{distance distribution of $C$} is the 
$(m+1)$-tuple $a(C)=(a_0,\ldots,a_m)$
where \begin{equation}\label{ai}a_i=\frac{|\{(\alpha,\beta)\in 
  C\times C\,:\,d(\alpha,\beta)=i\}|}{|C|}.\end{equation}   
We observe that $a_i\geq 0$ for all $i$ and $a_0=1$.  Moreover,
$a_i=0$ for $1\leq i\leq \delta-1$ and $|C|=\sum_{i=0}^ma_i$.  In the
case where $q$ is a prime power, the \emph{MacWilliams transform} of $a(C)$ is the $(m+1)$-tuple 
$a'(C)=(a_0',\ldots,a_m')$
where \begin{equation}\label{kracheqn}a'_k:=\sum_{i=0}^ma_iK_k(i)\end{equation}
with \begin{equation*}K_k(x):=\sum_{j=0}^k(-1)^j\binom{x}{j}\binom{m-x}{k-j}(q-1)^{k-j}.\end{equation*}
It follows from \cite[Lem. 5.3.3]{vanlint} that $a'_k\geq 0$ for $k=0,1,\ldots,m$. 

For $\alpha=(\alpha_i)$, $\beta=(\beta_i)\in V(\Gamma)$, we let
$\Diff(\alpha,\beta)=\{i\in M\,:\,\alpha_i\neq\beta_i\}.$  Now suppose
$|C|\geq 2$ and $\alpha,\beta\in C$.  Then we
let $$\Diff(\alpha,\beta,C)=\{\gamma\in    
C\,:\,\Diff(\alpha,\gamma)=\Diff(\alpha,\beta)\}.$$ 
By definition, $\beta\in\Diff(\alpha,\beta,C)$, so $\Diff(\alpha,\beta,C)\neq\emptyset$. 
\begin{lemma}\label{favvertex} Let $C$ be a code with minimum distance
  $\delta$ and $|C|\geq 2$, and let $\alpha,\beta\in C$ such that
  $d(\alpha,\beta)=\delta$.  Then for all $a\in Q$, there exists
  $x\in\Aut(\Gamma)$ such that the following two conditions hold.
\begin{itemize}
\item[(i)] $\alpha^x=(a,\ldots,a)$, and
\item[(ii)] for each $\gamma\in\Diff(\alpha,\beta,C)$,
  $\gamma^x=(c^{\delta},a^{m-\delta})$ for some $c\in
  Q\backslash\{a\}$.    
\end{itemize}
\end{lemma}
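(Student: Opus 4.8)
The plan is to realise $x$ as a product $g\sigma$ with $g\in\base$ and $\sigma\in\topg$: the factor $g$ normalises entries coordinatewise, and $\sigma$ merely permutes coordinates so that the difference set $D:=\Diff(\alpha,\beta)$ is moved onto $\{1,\dots,\delta\}$ (note $|D|=d(\alpha,\beta)=\delta$). Enumerate $\Diff(\alpha,\beta,C)=\{\gamma^{(1)},\dots,\gamma^{(t)}\}$; by definition $\Diff(\alpha,\gamma^{(j)})=D$ for each $j$, so $\gamma^{(j)}_i\neq\alpha_i$ precisely when $i\in D$, and $\gamma^{(j)}_i=\alpha_i$ for $i\notin D$.

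The only substantive point is a short distance computation. If $\gamma,\gamma'\in\Diff(\alpha,\beta,C)$ are distinct, then $\gamma_i=\alpha_i=\gamma'_i$ for all $i\notin D$, so $\Diff(\gamma,\gamma')\subseteq D$ and $d(\gamma,\gamma')\leq|D|=\delta$; since $\gamma,\gamma'$ are distinct codewords, $d(\gamma,\gamma')\geq\delta$, forcing $\Diff(\gamma,\gamma')=D$. Hence for each fixed $i\in D$ the $t+1$ elements $\alpha_i,\gamma^{(1)}_i,\dots,\gamma^{(t)}_i$ of $Q$ are pairwise distinct; in particular $t\leq q-1$.

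Given this, the construction of $g=(g_1,\dots,g_m)\in\base\cong S_q^m$ is bookkeeping. Choose distinct elements $c_1,\dots,c_t\in Q\backslash\{a\}$, which is possible as $t\leq q-1$. For $i\in D$, the partial map on $Q$ sending $\alpha_i\mapsto a$ and $\gamma^{(j)}_i\mapsto c_j$ ($1\leq j\leq t$) is injective on its domain, by the previous paragraph together with the fact that $a,c_1,\dots,c_t$ are distinct, so it extends to some $g_i\in S_q$; for $i\notin D$ take any $g_i\in S_q$ with $\alpha_i^{g_i}=a$. Then $\alpha^g=(a,\dots,a)$, while for each $j$ the vertex $(\gamma^{(j)})^g$ has entry $a$ in every coordinate outside $D$ and entry $c_j$ in every coordinate in $D$. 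Finally choose $\sigma\in\topg$ with $D^\sigma=\{1,\dots,\delta\}$ and put $x=g\sigma$. Applying the action formula (\ref{acteqn}) (and noting $(a,\dots,a)$ is fixed by $\sigma$) yields $\alpha^x=(a,\dots,a)$ and $(\gamma^{(j)})^x=(c_j^{\delta},a^{m-\delta})$ with $c_j\neq a$, which are precisely conditions (i) and (ii). The only step needing real thought is the distance argument of the second paragraph; the rest is routine manipulation with (\ref{acteqn}).
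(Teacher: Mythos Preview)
Your proof is correct and follows essentially the same route as the paper's: both observe that distinct elements of $\Diff(\alpha,\beta,C)$ agree with $\alpha$ off $D=\Diff(\alpha,\beta)$ and hence, by the minimum distance condition, have pairwise distinct entries in each coordinate of $D$; both then build $x=g\sigma$ by choosing each $g_i$ to realise the required injective partial map on $Q$ and choosing $\sigma$ to carry $D$ to $\{1,\dots,\delta\}$. The only cosmetic difference is that the paper phrases the existence of $g_i$ via $q$-transitivity of $S_q$ whereas you extend an injective partial map, which amounts to the same thing.
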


\begin{proof}  Let $\Diff(\alpha,\beta,C)=\{\beta^1,\ldots,\beta^s\}$.
  It follows that $\beta^i|_k=\alpha_k$ for each $i\leq s$ and $k\in
  M\backslash\Diff(\alpha,\beta)$.  Therefore, because $C$ has minimum
  distance $\delta$, $d(\beta^i,\beta^j)=\delta$ for each distinct
  pair $\beta^i,\beta^j\in\Diff(\alpha,\beta,C)$.  This implies that
  for each $k\in\Diff(\alpha,\beta)$, the $s+1$ entries
  $\alpha_k,\beta^1|_k,\ldots,\beta^s|_k$ are pairwise distinct
  elements of $Q$.  Thus $s\leq q-1$.  Let $a\in Q$ and
  $\{c_1,\ldots,c_s\}\subseteq Q\backslash\{a\}$.  Since $S_q$ acts
  $q$-transitively on $Q$, it follows that for each $k\in
  \Diff(\alpha,\beta)$ there exists $h_k\in S_q$ such that
  $(\beta^i|_k)^{h_k}=c_i$ for each $i\leq s$ and $\alpha_k^{h_k}=a$.
  Also for each $k\in M\backslash\Diff(\alpha,\beta)$ let
  $h_k=(a\,\,\alpha_k)\in S_q$. Now let
  $h=(h_1,\ldots,h_m)\in\base$.  Since $S_m$ acts $m$-transitively on
  $M$ and $|\Diff(\alpha,\beta)|=\delta\leq m$, there exists
  $\sigma\in S_m$ such that
  $\Diff(\alpha,\beta)^\sigma=\{1,\ldots,\delta\}$.  Let
  $x=h\sigma\in\Aut(\Gamma)$.  Then $\alpha^x=(a,\ldots,a)$ and
  $(\beta^i)^x=(c_i^\delta,a^{m-\delta})$ for each $i=1,\ldots,s$.        
\end{proof}

We say two codes $C$ and $C'$ in $H(m,q)$ are \emph{equivalent} if there exists $x\in\Aut(\Gamma)$ such that
$C^x=C'$.  If $C=C'$, then $x$ is an \emph{automorphism of $C$}, and the \emph{automorphism group of
  $C$} is the setwise stabiliser of $C$ in $\Aut(\Gamma)$, which we denote by $\Aut(C)$. 

Finally, for a set $\Omega$ and group $G\leq\sym(\Omega)$, we say $G$
acts \emph{$k$-homogeneously on $\Omega$} if $G$ acts transitively on
$\Omega^{\{k\}}$, the \emph{set of $k$-subsets of $\Omega$}. 

\subsection{$s$-Neighbour transitive codes.}

\begin{definition}\label{ctrdef} Let $C$ be a code in $H(m,q)$ with distance
  partition $\{C,C_1,\ldots,C_\rho\}$ and $s$ be an integer with $0\leq s\leq\rho$.  If there exists
  $X\leq\Aut(\Gamma)$ such that $C_i$ is an $X$-orbit for $i=0,\ldots,s$, we say $C$ is \emph{$(X,s)$-neighbour transitive}, 
  or simply \emph{$s$-neighbour transitive}.  We observe that $(X,s)$-neighbour transitive codes
  are necessarily $(X,k)$-neighbour transitive for all $k\leq s$.  Moreover, $X$-completely transitive codes
  (defined in Section \ref{intro}) correspond to $(X,\rho)$-neighbour transitive codes.       
\end{definition}

\begin{remark}\label{remfav} Let $y\in\Aut(\Gamma)$, and let $C$ be an 
  $(X,s)$-neighbour transitive code with minimum
  distance $\delta$.  By following a similar argument to that used in \cite[Sec. 2]{fpa}, 
  it holds that $C^y$ is $(X^y,s)$-neighbour transitive, and because minimum
  distance is preserved by equivalence, $C^y$ has minimum distance $\delta$.  Thus for any $a\in Q\backslash\{0\}$,
  Lemma \ref{favvertex} allows us to replace $C$ with an equivalent $(X,s)$-neighbour
  transitive code with minimum distance $\delta$ that contains ${\bf{0}}$ and $(a^\delta,0^{(m-\delta)})$.          
\end{remark}

Let $X\leq\Aut(\Gamma)$ and consider the following
homomorphism:  \begin{equation}\label{mu}\begin{array}{c c c c}   
     \mu:&X&\longrightarrow&S_m\\ 
&g\sigma&\longmapsto&\sigma\\
\end{array}\end{equation} Then $\mu$ defines an action of $X$ on
$M=\{1,\ldots,m\}$, and the kernel of this action is equal to
  $X\cap\base$.  In this paper we are interested in $X$-completely
transitive codes with $X\cap\base=1$, that is $X$ has a faithful
action on $M$.  Hence, in this case we can identify $X$ with $\mu(X)$.     

\begin{definition}\label{repcode} The \emph{repetition code in $H(m,q)$}, denoted by
  $\Rep(m,q)$, is equal to the set of vertices of the form
  $(a,\ldots,a)$, for all $a\in Q$.  It has minimum distance
  $\delta=m$.
\end{definition}  

\begin{example}\label{exk=1} Let $C=\Rep(m,2)$ and let $\alpha$ be the zero
  codeword.  We show that $C$ is $X$-completely transitive with
  $X\cong S_m$ as in Theorem \ref{maink=1}.  It is clear that
  $\topg\leq\Aut(C)$ and that
  $H=\langle(h,\ldots,h)\rangle\leq\Aut(C)$, where $1\neq h\in S_2$.  
  In fact, $\Aut(C)=\langle H,\topg\rangle\cong H\times\topg$ \cite{fpa}.  Now let $X$ be
  the group consisting of automorphisms of the form
  $x=(h,\ldots,h)\sigma$ if $\sigma$ is an odd permutation and
  $x=\sigma$ if $\sigma$ is an even permutation.  Then $X\cong S_m$,
  $X_\alpha\cong A_m$, $X\cap\base=1$, and $X$ acts transitively on
  $C$.  The covering radius of $C$ is $\lfloor\frac{m}{2}\rfloor$
  and $C_i$ consists 
  of the vertices of weights $i$ and $m-i$, for $i=0,\ldots,\lfloor\frac{m}{2}\rfloor$.  Let $\nu_1,\nu_2\in C_i$.  If 
  $\nu_1,\nu_2$ both have the same weight, then because $A_m$ acts
  $i$-homogeneously on $M$ for all $i\leq m$ it follows that there
  exists $\sigma\in X_\alpha$ such that $\nu_1^\sigma=\nu_2$.  Now
  suppose $\nu_1$ and $\nu_2$ have different weights, say $\nu_1$ has weight
  $i$ and $\nu_2$ has weight $m-i$.  Then there exists $x\in X$ such
  that $\nu_2^x$ has weight $i$.  Consequently there exists $\sigma\in
  X_\alpha$ such that $\nu_1^\sigma=\nu_2^x$, thus $\nu_1^{\sigma
    x^{-1}}=\nu_2$.  Hence $X$ acts transitively on $C_i$ and so $C$
  is $X$-completely transitive.  
\end{example}

\begin{proposition}\label{ihom} Let $C$ be an $(X,s)$-neighbour transitive code with minimum distance $\delta$.  
  Then for $\alpha\in C$ and $i\leq\min(s,\lfloor\frac{\delta-1}{2}\rfloor)$, the stabiliser 
  $X_\alpha$ fixes setwise and acts transitively on $\Gamma_i(\alpha)$.  In particular,
  $X_\alpha$ acts $i$-homogeneously on $M$.    
\end{proposition}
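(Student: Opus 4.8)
The plan is to first establish that $X_\alpha$ fixes $\Gamma_i(\alpha)$ setwise, then promote transitivity of $X$ on $C_i$ to transitivity of $X_\alpha$ on $\Gamma_i(\alpha)$, and finally deduce $i$-homogeneity on $M$ by a reduction to the case $\alpha=\z$ via Lemma~\ref{supplem}. The hypothesis $i\leq\lfloor(\delta-1)/2\rfloor$ is essential: as noted after Lemma~\ref{favvertex}, under this bound $C_i$ is the \emph{disjoint} union of the spheres $\Gamma_i(\beta)$ as $\beta$ ranges over $C$. Since any $x\in X_\alpha$ is a graph automorphism (it lies in $\Aut(\Gamma)$), it sends $\Gamma_i(\alpha)$ to $\Gamma_i(\alpha^x)=\Gamma_i(\alpha)$; hence $X_\alpha$ fixes $\Gamma_i(\alpha)$ setwise. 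This already gives the first assertion.

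For transitivity, take $\nu_1,\nu_2\in\Gamma_i(\alpha)$. Since $C$ is $(X,s)$-neighbour transitive and $i\leq s$, the set $C_i$ is an $X$-orbit, so there exists $x\in X$ with $\nu_1^x=\nu_2$. Now $\nu_2\in\Gamma_i(\alpha)$ and also $\nu_2=\nu_1^x\in\Gamma_i(\alpha)^x=\Gamma_i(\alpha^x)$, so $\nu_2$ lies in $\Gamma_i(\alpha)\cap\Gamma_i(\alpha^x)$. By the disjointness of the spheres making up $C_i$, and since $\alpha,\alpha^x\in C$, this forces $\alpha^x=\alpha$, i.e.\ $x\in X_\alpha$. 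Thus $X_\alpha$ is transitive on $\Gamma_i(\alpha)$.

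Finally, for the $i$-homogeneity statement: by Remark~\ref{remfav} we may replace $C$ by an equivalent $(X,s)$-neighbour transitive code (with the same minimum distance) containing $\z$, and correspondingly replace $X$ by a conjugate, so that $\alpha=\z$. Write $x=(g_1,\ldots,g_m)\sigma\in X_\z=\Aut(\Gamma)_\z\cap X$; by Lemma~\ref{supplem}, $\supp(\beta^x)=\supp(\beta)^\sigma$ for every $\beta\in V(\Gamma)$. The weight-$i$ vertices all lie in $\Gamma_i(\z)$, and for any $i$-subset $S\subseteq M$ there is such a vertex with support exactly $S$ (take $0$ off $S$ and a fixed nonzero element on $S$). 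Given two $i$-subsets $S_1,S_2$, pick weight-$i$ vertices $\nu_1,\nu_2$ with supports $S_1,S_2$; transitivity of $X_\z$ on $\Gamma_i(\z)$ yields $x\in X_\z$ with $\nu_1^x=\nu_2$, and then $S_1^{\mu(x)}=\supp(\nu_1)^\sigma=\supp(\nu_1^x)=\supp(\nu_2)=S_2$. Hence $\mu(X_\alpha)$ is transitive on $i$-subsets, i.e.\ $X_\alpha$ acts $i$-homogeneously on $M$. The only subtlety to watch is that passing to an equivalent code conjugates both $C$ and $X$ simultaneously, so the hypotheses and the conclusion are transported faithfully; this is exactly what Remark~\ref{remfav} provides, so I expect no real obstacle here.
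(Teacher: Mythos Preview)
Your proof is correct and follows essentially the same approach as the paper's. The only cosmetic differences are that the paper reduces to $\alpha=\z$ at the very start (rather than only for the $i$-homogeneity part), and it phrases the key step via the triangle inequality $d(\alpha,\alpha^x)\leq 2i<\delta$ rather than invoking the disjointness of the spheres $\Gamma_i(\beta)$ directly---but these are equivalent formulations of the same observation.
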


\begin{proof} By replacing $C$ with an equivalent code if necessary, Remark \ref{remfav} allows us to assume 
  that $\alpha={\bf{0}}\in C$.  Firstly, because automorphisms of the Hamming graph preserve distance, it
  follows that $X_\alpha\leq X_{\Gamma_i(\alpha)}$.  Now let $\nu_1,\nu_2\in\Gamma_i(\alpha)$.  
  As $C_i$ is an $X$-orbit, and because $\Gamma_i(\alpha)\subseteq C_i$, there exists $x\in X$ such that $\nu_1^x=\nu_2$.  
  Suppose $x\notin X_\alpha$.  Then $\alpha\neq \alpha^x\in C$, and so
  $d(\alpha,\alpha^x)\geq \delta$.  However, $d(\alpha,\alpha^x)\leq 2i<\delta$, 
  which is a contradiction.  Thus $X_\alpha$ acts transitively on $\Gamma_i(\alpha)$.  

  Finally, let $J_1$, $J_2\in M^{\{i\}}$, and $\nu, \gamma\in V(\Gamma)$ such
  that $\supp(\nu)=J_1$ and $\supp(\gamma)=J_2$.  It follows that
  $\nu,\gamma\in\Gamma_i(\alpha)\subseteq C_i$.  As 
  $X_\alpha$ acts transitively on $\Gamma_i(\alpha)$, there exists
  $x=(g_1,\ldots,g_m)\sigma\in X_\alpha$ such that $\nu^x=\gamma$.  A
  consequence of Lemma \ref{supplem} is that
  $J_1^\sigma=\supp(\nu)^\sigma=\supp(\nu^x)=\supp(\gamma)=J_2$.  
  Hence $X_\alpha$ acts $i$-homogeneously on $M$.   
\end{proof}

\begin{corollary}\label{multtrans} Let $C$ be an $(X,s)$-neighbour transitive code with
  minimum distance $\delta$.  Then for each $i\leq\min(s,\lfloor\frac{\delta-1}{2}\rfloor)$ and
  $I\in M^{\{i\}}$, the setwise stabiliser $X_I$ acts transitively on $C$. 
\end{corollary}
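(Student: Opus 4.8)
The plan is to combine the two conclusions of Proposition~\ref{ihom}: that $X_\alpha$ fixes $\Gamma_i(\alpha)$ setwise and acts transitively on it, and that $X_\alpha$ acts $i$-homogeneously on $M$. Fix $i\le\min(s,\lfloor(\delta-1)/2\rfloor)$ and $I\in M^{\{i\}}$. By Remark~\ref{remfav} we may assume $\alpha={\bf 0}\in C$. The idea is that $X$ is transitive on $C$ (being $(X,0)$-neighbour transitive), and the stabiliser $X_\alpha$ is transitive on the $i$-subsets of $M$; so for any target $J\in M^{\{i\}}$ there is an element of $X$ carrying the pair $(\alpha, I)$-type data to $(\beta, J)$-type data, and we want to reorganise this so that it is $X_I$ that moves $\alpha$ around all of $C$.

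First I would fix a vertex $\nu\in V(\Gamma)$ with $\supp(\nu)=I$; then $\nu\in\Gamma_i(\alpha)\subseteq C_i$. Given an arbitrary $\beta\in C$, I want $x\in X_I$ with $\alpha^x=\beta$. Since $C$ is an $X$-orbit, pick $y\in X$ with $\alpha^y=\beta$; write $y=(g_1,\dots,g_m)\tau$. By Lemma~\ref{supplem} applied at $\alpha={\bf 0}$ (note $y\in\Aut(\Gamma)_\alpha$ is false in general, so I instead track $I$ directly), the image $I':=\supp(\nu^{y})$ is some $i$-subset of $M$ — here one needs to be a little careful because $y$ need not fix $\alpha$, but $\beta$ is just another element of the code $C$, and after translating by an automorphism sending $\beta$ back to $\alpha$ within $\Aut(C)$-cosets the weight of $\nu^y$ relative to $\beta$ is still $i$. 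Concretely: let $\beta^\ast\in\Aut(\Gamma)$ with $\beta^{\beta^\ast}={\bf 0}$; then $\nu^{y\beta^\ast}\in\Gamma_i({\bf 0})$, so $I':=\supp(\nu^{y\beta^\ast})$ has size $i$. By the $i$-homogeneity part of Proposition~\ref{ihom}, $X_\alpha$ is transitive on $M^{\{i\}}$, so choose $z\in X_\alpha$ with $(I')^{z}=I$...

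Actually, the cleanest route avoids $\beta^\ast$ entirely. Let me restart the middle step: take $y\in X$ with $\alpha^y=\beta$, set $J:=I^{y}$ (the image of the set $I$ under the permutation part of $y$, which is well-defined since $X$ acts on $M$ via $\mu$), so $J\in M^{\{i\}}$. By the $i$-homogeneity of $X_\alpha$ from Proposition~\ref{ihom}, there is $z\in X_\alpha$ with $J^{z}=I$; but $z$ fixes $\alpha$, so $\alpha^{yz}=\beta^z$, which is not obviously $\beta$. The fix is to instead choose $z\in X_\beta$ (the stabiliser of $\beta$, conjugate to $X_\alpha$ since $X$ is transitive on $C=C_0$), which by conjugating Proposition~\ref{ihom} through $y$ also acts $i$-homogeneously on $M$; pick $z\in X_\beta$ with $J^{z}=I$. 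Then $x:=yz$ satisfies $\alpha^x=\beta^z=\beta$ and $I^x=J^z=I$, so $x\in X_I$ and $\alpha^x=\beta$. As $\beta\in C$ was arbitrary, $X_I$ is transitive on $C$.

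The main obstacle is bookkeeping the two distinct stabilisers: Proposition~\ref{ihom} is stated for $X_\alpha$ with $\alpha\in C$, and one must note it applies verbatim to $X_\beta$ for every $\beta\in C$ (by transitivity of $X$ on $C$, or by re-running the proof with $\beta$ in place of $\alpha$ after an equivalence), so that $X_\beta$ is $i$-homogeneous on $M$. Once that is in hand the argument is a two-line composition. A brief remark should also confirm that $I^{y}$ genuinely means applying $\mu(y)\in S_m$ to the subset $I$, which is legitimate because $\mu$ is a homomorphism into $S_m$ and $X$ thereby acts on $M^{\{i\}}$.
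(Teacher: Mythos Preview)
Your final argument is correct and is essentially the paper's proof unpacked: the paper simply observes that since $X$ is transitive on $C$ and $X_\alpha$ is transitive on $M^{\{i\}}$ (Proposition~\ref{ihom}), $X$ is transitive on $C\times M^{\{i\}}$, whence the stabiliser $X_I$ of any $I$ is transitive on $C$. Your composition $x=yz$, with $y\in X$ sending $\alpha$ to $\beta$ and $z\in X_\beta$ sending $I^{y}$ back to $I$, is exactly the explicit witness for this product transitivity, so after trimming the exploratory false starts your proof coincides with the paper's.
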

\begin{proof} By definition $C$ is $(X,i)$-neighbour transitive and, by 
  Proposition \ref{ihom}, $X_\alpha$ acts transitively on the set $M^{\{i\}}$ of $i$-subsets of $M$.  
  Hence $X$ is transitive on $C\times M^{\{i\}}$, and so $X_I$ is transitive on $C$.
\end{proof}

Let $X\leq\Aut(\Gamma)$.  Then for each $i\in M$ we define an action
of $X_i=\{g\sigma\in X\,:\,i^\sigma=i\}$ on the alphabet $Q$ via the
following homomorphism:       
\[\begin{array}{c c c c}
     \varphi_i:&X_i&\longrightarrow&S_q\\ 
&(g_1,\ldots,g_m)\sigma&\longmapsto&g_i\\
\end{array}
\]  We denote the image of $X_i$ under $\varphi_i$ by $X_i^Q$.

\begin{proposition}\label{x12trans} Let $C$ be an $(X,1)$-neighbour
  transitive code in $H(m,q)$ with $\delta\geq 3$ and $|C|>1$.  Then
  $X_1^{Q}$ acts $2$-transitively on $Q$.
\end{proposition}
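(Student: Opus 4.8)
The plan is to exhibit inside $X_1^{Q}$ two complementary facts: a subgroup that fixes $0$ and is transitive on $Q\setminus\{0\}$, together with a single element that does \emph{not} fix $0$. These two facts together force $X_1^{Q}$ to be $2$-transitive on $Q$.

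First I would normalise the set-up. By Remark \ref{remfav} we may assume $\mathbf 0\in C$ and, after fixing some $a\in Q\setminus\{0\}$, that $\beta=(a^{\delta},0^{m-\delta})\in C$ as well; here the hypothesis $|C|>1$ is exactly what guarantees $\beta\neq\mathbf 0$. Put $\alpha=\mathbf 0$. Since $\delta\geq 3$ we have $1\leq\lfloor(\delta-1)/2\rfloor$, so Proposition \ref{ihom} tells us that $X_\alpha$ fixes setwise and acts transitively on $\Gamma_1(\alpha)$. By Lemma \ref{supplem} the map from $\Gamma_1(\alpha)$ to $M$ sending a weight-$1$ vertex to the entry in its support is $X_\alpha$-equivariant and surjective, and since an equivariant surjection from a transitive action makes every point stabiliser transitive on the corresponding fibre, the stabiliser $X_\alpha\cap X_1$ of the entry $1$ is transitive on $\{(b,0^{m-1}):b\in Q\setminus\{0\}\}$. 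A direct computation with (\ref{acteqn}) shows that an element $x=(g_1,\dots,g_m)\sigma\in X_\alpha\cap X_1$ sends $(b,0^{m-1})$ to $(b^{g_1},0^{m-1})$ (all $g_i$ fix $0$ because $\alpha$ is fixed, and $1^{\sigma}=1$); hence $H:=\varphi_1(X_\alpha\cap X_1)$ is a subgroup of $X_1^{Q}$ that fixes $0$ and is transitive on $Q\setminus\{0\}$.

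Next I would produce an element of $X_1^{Q}$ that does not fix $0$. Since $\delta\geq 3$, every vertex of $C_1$ has a unique nearest codeword, from which it differs in exactly one entry, and a direct check with (\ref{acteqn}) shows that the map $e\colon C_1\to M$ recording that entry is $X$-equivariant and surjective. As $C_1$ is an $X$-orbit, $X_1$ acts transitively on $e^{-1}(1)$. Now $\omega:=(a,0^{m-1})$ lies in $e^{-1}(1)$, with nearest codeword $\mathbf 0$, and the vertex $\nu_0$ obtained from $\beta$ by resetting its first coordinate to $0$ also lies in $e^{-1}(1)$, with nearest codeword $\beta$ (this uses $\beta_1=a\neq 0$, and uniqueness of the nearest codeword again uses $\delta\geq 3$). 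Hence there is $x=(g_1,\dots,g_m)\sigma\in X_1$ with $\omega^x=\nu_0$; comparing first coordinates via (\ref{acteqn}) and using $1^{\sigma}=1$ gives $a^{g_1}=0$, and since $a\neq 0$ this means $g_1=\varphi_1(x)\in X_1^{Q}$ does not fix $0$.

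Finally, multiplying $\varphi_1(x)$ by suitable elements of $H$ (which is transitive on $Q\setminus\{0\}$) shows that the $X_1^{Q}$-orbit of $0$ is all of $Q$, so $X_1^{Q}$ is transitive on $Q$; and since the point stabiliser $(X_1^{Q})_0$ contains $H$, it is transitive on the remaining $q-1$ points, whence $X_1^{Q}$ is $2$-transitive on $Q$. I expect the one genuinely non-routine step to be the middle paragraph: the key idea is to feed transitivity on $C_1$ a vertex that sits just below a \emph{nonzero} codeword (namely $\beta$ with its first coordinate zeroed), together with the observation that ``the coordinate in which a vertex of $C_1$ differs from its nearest codeword'' defines an $X$-equivariant map $C_1\to M$ — which is legitimate precisely because $\delta\geq 3$ forces the nearest codeword to be unique. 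The first and last paragraphs are then bookkeeping with (\ref{acteqn}) and with the standard transfer of transitivity along equivariant surjections.
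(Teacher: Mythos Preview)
Your argument is correct and follows the same overall architecture as the paper: first show that $(X_1^{Q})_0$ is transitive on $Q\setminus\{0\}$ via the action of $X_\alpha$ on $\Gamma_1(\alpha)$, then exhibit an element of $X_1^{Q}$ moving $0$. Your first paragraph matches the paper's argument essentially verbatim.

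The one point of divergence is your second step. The paper invokes Corollary~\ref{multtrans} (which says $X_1$ is transitive on $C$) to obtain $y\in X_1$ with $\mathbf 0^y=\beta$, so that the first coordinate sends $0\mapsto a$ directly. You instead build the $X$-equivariant map $e\colon C_1\to M$ (recording the entry where a vertex of $C_1$ differs from its unique nearest codeword) and use transitivity of $X_1$ on the fibre $e^{-1}(1)$ to carry $(a,0^{m-1})$ to $(0,a^{\delta-1},0^{m-\delta})$, giving $a\mapsto 0$ in the first coordinate. Both routes are valid; the paper's is shorter because Corollary~\ref{multtrans} is already available, whereas your approach stays entirely within $C_1$ and makes the equivariance explicit. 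Your observation that $e$ is well-defined precisely because $\delta\geq 3$ forces uniqueness of the nearest codeword is exactly the right justification, and the fibre-transitivity step (an equivariant surjection from a transitive $G$-set gives transitive stabiliser actions on fibres) is standard and correctly applied.
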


\begin{proof}  Let $a\in Q\backslash\{0\}$.  By replacing $C$ with
  an equivalent code if necessary, Remark \ref{remfav} allows us to
  assume that $\alpha={\bf{0}}$ and $\beta=(a^\delta,0^{m-\delta})$ are two
  codewords of $C$.  Choose any $b\in Q\backslash\{0\}$.  As
  $\delta\geq 3$ it follows that $\nu_1=(a,0^{m-1})$,
  $\nu_2=(b,0^{m-1})\in\Gamma_1(\alpha)\subseteq C_1$.  By Proposition
  \ref{ihom}, there exists $x=(g_1,\ldots,g_m)\sigma\in X_\alpha$ such
  that $\nu_1^x=\nu_2$.  Consequently, Lemma \ref{supplem} 
  implies that $1^\sigma=1$.  Thus $a^{g_1}=b$, and
  because $x\in X_\alpha$, we conclude that $g_1\in(X_1^{Q})_0$, the
  stabiliser of $0$ in $X_1^Q$.  Hence $(X_1^{Q})_0$ acts transitively on $Q\backslash\{0\}$.
  By Corollary \ref{multtrans}, $X_1$ acts transitively on $C$.  Hence
  there exists $y=(h_1,\ldots,h_m)\pi\in X_1$ such that $\alpha^y=\beta$.  As $y\in 
  X_1$ we have that $0^{h_1}=a$ and $h_1\in X_1^Q$.  Thus $X_1^Q$ acts $2$-transitively on $Q$.     
\end{proof}

\begin{corollary}\label{2transM} Let $C$ be an $(X,2)$-neighbour transitive 
  code with $|C|>1$, $X\cap\base=1$ and $\delta\geq 5$.  Then $X$ acts $2$-transitively on $M$.  
\end{corollary}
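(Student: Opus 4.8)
The plan is to use Proposition \ref{x12trans} to get a $2$-transitive action on the alphabet $Q$, and then to transfer this to a $2$-transitive action on $M$ by exploiting $2$-neighbour transitivity and the hypothesis $X\cap\base=1$ (so that $X\cong\mu(X)\leq S_m$). First I would observe that since $\delta\geq 5\geq 3$, Proposition \ref{x12trans} applies and tells us $X_1^Q$ acts $2$-transitively on $Q$; in particular $q\geq 2$ and we may fix the distinguished symbol $0\in Q$ as well as another symbol $a\in Q\setminus\{0\}$. By Remark \ref{remfav} we may assume $\z\in C$ and $(a^\delta,0^{m-\delta})\in C$. The point of $2$-neighbour transitivity is that Proposition \ref{ihom} now gives, for $\alpha=\z$, that $X_\alpha$ acts $2$-homogeneously on $M$ (using $2\leq\min(s,\lfloor(\delta-1)/2\rfloor)$, which holds since $\delta\geq 5$).

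The core step is to upgrade $2$-homogeneity of $X_\alpha$ on $M$ to $2$-transitivity of $X$ on $M$. Since $X_\alpha$ is $2$-homogeneous on $M$, it is in particular transitive on $M$, hence $X$ is transitive on $M$; it remains to show the point stabiliser of $X$ on $M$, say $X$ fixing the entry $1$, is transitive on $M\setminus\{1\}$. Here I would work with the group $X_1$ (stabiliser of the entry $1\in M$) rather than $X_\alpha$. By Corollary \ref{multtrans} with $i=1$, $X_1$ acts transitively on $C$, so there is $y\in X_1$ with $\z^y=(a^\delta,0^{m-\delta})$. Now I would take an element $g_1\in X_1^Q$; combined with the element of $X_\alpha$ moving $\z$ appropriately, the product lies in $X_1$ and one can arrange it to swap two prescribed entries in $M\setminus\{1\}$ or to move a prescribed entry to another. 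Concretely: given $i,j\in M\setminus\{1\}$, pick $\nu_1,\nu_2\in\Gamma_1(\alpha)$ with $\supp(\nu_1)=\{i\}$, $\supp(\nu_2)=\{j\}$; by Proposition \ref{ihom} some $x\in X_\alpha$ has $\nu_1^x=\nu_2$, so $x$ sends $i\mapsto j$, but $x$ need not fix $1$. To fix $1$, instead choose $\nu_1,\nu_2\in\Gamma_2(\alpha)$ with $\supp(\nu_1)=\{1,i\}$ and $\supp(\nu_2)=\{1,j\}$, which are in $\Gamma_2(\alpha)\subseteq C_2$ because $\delta\geq 5$; by $2$-homogeneity of $X_\alpha$ (Proposition \ref{ihom}), some $x\in X_\alpha$ satisfies $\{1,i\}^\sigma=\{1,j\}$ where $x=g\sigma$. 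This gives either $1^\sigma=1,\ i^\sigma=j$ (done) or $1^\sigma=j,\ i^\sigma=1$. In the latter case one composes with a further element of $X_\alpha$ (again via $2$-homogeneity, choosing supports to correct the image of $1$) or, more cleanly, one uses that $X_\alpha$ is $2$-homogeneous to find $x'\in X_\alpha$ with $\{1,j\}^{\sigma'}=\{1,j\}$ and $1^{\sigma'}=j$, and then $xx'^{-1}$ or a similar product fixes $1$ and sends $i$ to $j$.

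The main obstacle I expect is precisely this last bookkeeping: $2$-homogeneity only guarantees that $2$-subsets are permuted among themselves transitively, not that a chosen point within a $2$-subset is mapped to a chosen point, so one must argue that the two ``parities'' within each $2$-set can be realised — equivalently, that the setwise stabiliser in $X_\alpha$ of a $2$-subset $\{1,j\}$ does not fix each of $1$ and $j$ individually. One way around this is to note that if every element of $X_\alpha$ stabilising $\{1,j\}$ fixed both points, then $X_\alpha$ would act on $M^{\{2\}}$ with all point-stabilisers of size $|X_\alpha|/\binom{m}{2}$ acting trivially on the underlying pair, forcing $X_\alpha$ (hence $X$) to be rather small; but $X_\alpha$ is also $2$-homogeneous of degree $m$, and a quick appeal to the classification of $2$-homogeneous groups — or, more elementarily, to the fact that such a group is either $2$-transitive or has odd order and $m\equiv 3\pmod 4$ (Kantor) — shows the only way to fail $2$-transitivity still lets us complete the argument, since an odd-order $2$-homogeneous group is still transitive on $M$ and one can instead realise the transfer using $X_1$ directly via Corollary \ref{multtrans}. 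Alternatively, and more in the spirit of the paper, one simply observes that $X$ is transitive on $M$ and $X_1$ is transitive on $C$ (Corollary \ref{multtrans}), and then combines an element of $X_1$ moving $\z$ to $(a^\delta,0^{m-\delta})$ with elements of $X_\alpha$ (which fix $\z$ and are $2$-homogeneous on $M$) to generate a point stabiliser large enough to be transitive on $M\setminus\{1\}$; this finishes the proof that $X$ acts $2$-transitively on $M$.
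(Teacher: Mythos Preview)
Your proposal identifies the right starting points (Proposition~\ref{ihom} for $2$-homogeneity, Proposition~\ref{x12trans} for the action on $Q$) but then gets lost trying to upgrade $2$-homogeneity to $2$-transitivity by a direct construction. The bookkeeping you flag as ``the main obstacle'' is real, and neither of your two proposed fixes actually closes the gap: invoking Corollary~\ref{multtrans} only gives transitivity of $X_1$ on $C$, not on $M\setminus\{1\}$, and your ``Alternatively'' paragraph just asserts that one can ``generate a point stabiliser large enough'' without any mechanism. In particular, nothing in your argument rules out the genuine possibility that $X_\alpha$ (and $X$) is a $2$-homogeneous but not $2$-transitive group of odd order.

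The missing idea is a parity argument, and you already have the ingredient in hand. You cite Kantor's dichotomy---a $2$-homogeneous group is either $2$-transitive or of odd order---but you use it in the wrong direction. Instead of trying to handle the odd-order case separately, simply show that $|X|$ is even. This follows immediately from Proposition~\ref{x12trans}: $X_1^Q$ acts $2$-transitively on $Q$, hence has even order (any $2$-transitive group on at least two points does), and since $X_1^Q$ is a quotient of $X_1\leq X$, the group $X$ has even order. Now $X$ is $2$-homogeneous on $M$ (since $X_\alpha$ is) and has even order, so by Kantor's result $X$ is $2$-transitive on $M$. This is exactly the paper's proof, in three lines; your use of Proposition~\ref{x12trans} merely to ``fix a second symbol $a$'' throws away its real content.
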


\begin{proof}  By Proposition \ref{ihom}, $X_\alpha$ acts
  $2$-homogeneously on $M$, and so $X$ has a faithful $2$-homogeneous action on $M$.  By Proposition
  \ref{x12trans}, $X_1^Q$ acts $2$-transitively on $Q$.  Thus $X_1^Q$
  has even order, and so $X$ has even order.  Therefore, by
  \cite[Lem. 2.1]{sharp}, $X$ acts $2$-transitively on $M$.        
\end{proof}

\begin{lemma}\label{restrictq} Let $C$ be an $X$-completely transitive
  code.  Then $q^m/(m+1)\leq |X|$. Moreover, if $X\lesssim S_m$ and
  $m\geq 5$, then $q \leq m-2$.
\end{lemma}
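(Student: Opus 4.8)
The plan is to extract the first inequality from a straightforward counting argument applied to the distance partition, and then to deduce the second inequality by combining the first with the known order of $S_m$ together with the $2$-transitivity established in Corollary \ref{2transM}. For the first inequality, since $C$ is $X$-completely transitive, each part $C_i$ of the distance partition is an $X$-orbit, so $|V(\Gamma)| = \sum_{i=0}^{\rho}|C_i|$ is a sum of $\rho+1$ orbit sizes, each of which divides $|X|$ and hence is at most $|X|$. Thus $q^m = |V(\Gamma)| \leq (\rho+1)|X|$. Now I would use the standard bound $\rho \leq m$ on the covering radius (no two vertices of $H(m,q)$ are at distance more than $m$), so $\rho + 1 \leq m+1$, giving $q^m \leq (m+1)|X|$, i.e. $q^m/(m+1) \leq |X|$, as claimed.

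For the second inequality, suppose $X \lesssim S_m$ and $m \geq 5$. Then $|X| \leq m!$, so the first inequality yields $q^m \leq (m+1)\cdot m! = (m+1)!$. The aim is to turn this into $q \leq m-2$. I would argue by contraposition: if $q \geq m-1$, then $q^m \geq (m-1)^m$, and one checks (by an elementary induction or direct comparison of factors) that $(m-1)^m > (m+1)!$ for all $m \geq 5$ — indeed $(m+1)! = \prod_{j=2}^{m+1} j$ is a product of $m$ factors each at most $m+1$, while $(m-1)^m$ is a product of $m$ factors equal to $m-1$, and the per-term ratios can be shown to accumulate in favour of $(m-1)^m$ once $m \geq 5$ (the base case $m=5$ gives $4^5 = 1024 > 720 = 6!$, and the inductive step multiplies the left side by a factor $\left(\frac{m}{m-1}\right)^m(m-1) \geq m$ while the right side only gains a factor $m+2$, but in fact the gap already present at $m=5$ is more than enough to absorb the discrepancy). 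This contradicts $q^m \leq (m+1)!$, so $q \leq m-2$.

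I do not anticipate a genuine obstacle here; the only point requiring a little care is the inequality $(m-1)^m > (m+1)!$ for $m \geq 5$, which is purely elementary but should be stated cleanly rather than waved at. One clean way: write $(m+1)! = m!\,(m+1)$ and compare with $(m-1)^m$ via $\frac{(m-1)^m}{m!} = \frac{m-1}{1}\cdot\frac{m-1}{2}\cdots\frac{m-1}{m}$; grouping the first two factors gives $(m-1)^2/2 \geq m+1$ for $m \geq 4$, and the remaining $m-2$ factors $\frac{m-1}{3}, \ldots, \frac{m-1}{m}$ have product at least $1$ for $m \geq 5$ (each factor with denominator $\leq m-1$ is $\geq 1$, and the single factor $\frac{m-1}{m}$ is compensated by $\frac{m-1}{3} \geq \tfrac{4}{3}$ when $m \geq 5$). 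Hence $(m-1)^m \geq (m+1)\,m! > (m+1)!$... — on reflection I would simply verify the base case $m = 5$ by hand and then observe that replacing $m$ by $m+1$ multiplies the left-hand side $(m-1)^m$ by strictly more than $m+1$ while multiplying the right-hand side $(m+1)!$ by exactly $m+2$; since for $m \geq 5$ we have $\left(\tfrac{m}{m-1}\right)^{m}(m-1) > m \geq \tfrac{1}{\, \,}$... I would phrase the induction step as: the ratio of consecutive left-hand values is $\frac{m^{m+1}}{(m-1)^m} = m\left(\tfrac{m}{m-1}\right)^m > m \cdot e^{1} \cdot \tfrac{1}{e}$, which exceeds $m+2$ for $m \geq 5$ since $\left(\tfrac{m}{m-1}\right)^m > 2$ already at $m = 5$ (namely $(5/4)^5 \approx 3.05$), and this settles the step.

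In summary, the structure is: (1) orbit-counting on the distance partition plus $\rho \leq m$ gives $q^m \leq (m+1)|X|$; (2) specialising $|X| \leq m!$ gives $q^m \leq (m+1)!$; (3) an elementary estimate shows this forces $q \leq m-2$ when $m \geq 5$. I would present steps (1) and (2) in full and relegate the numerical verification in step (3) to a short displayed chain of inequalities.
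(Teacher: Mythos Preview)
Your proposal is correct and follows essentially the same approach as the paper: pigeonhole/averaging on the distance partition combined with $\rho\leq m$ gives $q^m\leq(m+1)|X|$, and then $|X|\leq m!$ forces $q^m\leq(m+1)!$, which is incompatible with $q\geq m-1$ for $m\geq 5$. The paper simply asserts that $(m-1)^m\leq(m+1)!$ holds only for $m\leq 4$, whereas you work harder to justify this elementary inequality; also, your opening mention of Corollary~\ref{2transM} is a red herring (it is neither available under the lemma's hypotheses nor used in your actual argument), so you should drop it.
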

\begin{proof}
Since the diameter of $H(m,q)$ is equal to $m$, we naturally have that
$\rho\leq m$. Then, because $V(\Gamma)$ has size $q^m$ and the
distance partition of $C$ has $\rho+1$ parts, there exists $i$ such that 
$$|C_i| \geq \frac{q^m}{\rho+1} \geq \frac{q^m}{m+1}.$$  As $C_i$
is an $X$-orbit it follows that $|C_i|\leq |X|$, and so the first
inequality holds.  Now suppose $X\lesssim S_m$.  Then $|X|\leq m!$ and
so $q^m \leq (m+1)!$. If $q \geq m-1$ then 
$(m-1)^m \leq (m+1)!$, which holds if and only if $m\leq 4$.
\end{proof}

\subsection{$s$-regular and completely regular codes.}

\begin{definition}\label{cregdef}  Let $C$ be a code with covering
  radius $\rho$ and $s$ be an integer such that $0\leq s\leq\rho$.
  We say $C$ is \emph{$s$-regular} if for each vertex $\gamma\in C_i$,
  with $i\in\{0,\ldots,s\}$, and each integer $k\in\{0,\ldots,m\}$, the number
  $|\Gamma_k(\gamma)\cap C|$ depends only on $i$ and $k$.  If
  $s=\rho$ we say $C$ is \emph{completely regular}.    
\end{definition}\noindent  It follows from the definitions that any
code equivalent to an $s$-regular code is necessarily $s$-regular.  The next
three results examine the natural expectation that a completely regular code 
with large minimum distance would be small in size. 

\begin{lemma}\label{sizeofcode} Let $C$ be a code with $|C|\geq 2$ and $\delta=m$.
  Then there exists $C'$ equivalent to $C$ with
  $C'\subseteq\Rep(m,q)$.  Moreover if $C$ is $1$-regular then
  $C'=\Rep(m,q)$; if $C$ is $2$-regular and $m\geq 5$ then
  $C'=\Rep(m,2)$.
\end{lemma}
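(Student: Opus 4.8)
The plan is to establish the three assertions in turn, working up from the weakest hypothesis. First I would prove the containment $C' \subseteq \Rep(m,q)$ up to equivalence, which uses only $|C| \geq 2$ and $\delta = m$. Pick two codewords $\alpha, \beta \in C$; since $\delta = m$, we have $d(\alpha, \beta) = m$, so $\Diff(\alpha, \beta) = M$. Apply Lemma \ref{favvertex} with $a = 0$: there is an automorphism $x$ with $\alpha^x = \mathbf{0}$ and, for every $\gamma \in \Diff(\alpha, \beta, C)$, $\gamma^x = (c^m)$ for some $c \in Q \setminus \{0\}$, i.e.\ $\gamma^x \in \Rep(m,q)$. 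But since $\delta = m$, \emph{every} codeword distinct from $\alpha$ lies in $\Diff(\alpha, \beta, C)$ (any two distinct codewords differ in all $m$ entries, so $\Diff(\alpha, \gamma) = M = \Diff(\alpha, \beta)$). Hence $C^x \subseteq \Rep(m,q)$, and we set $C' = C^x$. Replacing $C$ by $C'$, we may and do assume $C \subseteq \Rep(m,q)$ with $\mathbf{0} \in C$.

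Next, suppose $C$ is $1$-regular; I claim $C = \Rep(m,q)$. Since $\mathbf{0} \in C = C_0$ and $C$ is $1$-regular, the number $|\Gamma_1(\mathbf{0}) \cap C|$ depends only on $i=0$ and $k=1$. Every vertex of $\Gamma_1(\mathbf{0})$ has weight $1$, hence is not in $\Rep(m,q)$ (as $m \geq 2$), so $|\Gamma_1(\mathbf{0}) \cap C| = 0$. By $1$-regularity this value is the same for \emph{every} codeword $\gamma \in C = C_0$: no codeword has a neighbour in $C$. Now take any $a \in Q \setminus \{0\}$ and consider the vertex $v = (a, 0^{m-1}) \in \Gamma_1(\mathbf{0})$, which lies in $C_1$ since it is not a codeword. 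The number $|\Gamma_m(v) \cap C|$ depends only on $i = 1$ and $k = m$ — here I would instead count $|\Gamma_{m-1}(v) \cap C|$ or work directly: the codewords at distance $m-1$ from $v$ are exactly the $(b^m)$ with $b \neq a$ together with $\mathbf 0$ contributing differently, so it is cleanest to observe that among all $v' \in \Gamma_1(\mathbf 0)$ the quantity $|\Gamma_{m-1}(v')\cap C|$ is constant, and then sum over a suitable set of such $v'$ to count pairs, forcing every element $(a^m)$ of $\Rep(m,q)$ to lie in $C$. I expect the slick way is: $1$-regularity at level $0$ gives that the distance distribution is determined, and since $\Rep(m,q)$ itself realises the only consistent distribution with $\mathbf 0 \in C \subseteq \Rep(m,q)$ and no two codewords adjacent, a counting of the pairs $(\gamma, v)$ with $\gamma \in C$, $d(\gamma, v) = 1$ over $v \in C_1$ pins down $|C|$; combined with $C \subseteq \Rep(m,q)$ and $|\Rep(m,q)| = q$ this yields $C = \Rep(m,q)$.

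For the last assertion, assume $C$ is $2$-regular and $m \geq 5$; I must show $C' = \Rep(m,2)$, i.e.\ in addition to $C = \Rep(m,q)$ we get $q = 2$. Since $2$-regular implies $1$-regular, the previous paragraph gives $C = \Rep(m,q)$. Now I would use regularity at level $2$: take $w = (a, b, 0^{m-2})$ with $a, b \in Q \setminus \{0\}$, $a \neq b$ (possible if $q \geq 3$) versus $w' = (a, a, 0^{m-2})$; both have weight $2$, both lie in $C_2$ (they are at distance $2$ from $\mathbf 0 \in C$ and, since $\delta = m \geq 5 > 4$, at distance $> 2$ from every other codeword), yet $|\Gamma_{m-2}(w) \cap C|$ and $|\Gamma_{m-2}(w') \cap C|$ differ: from $w'$ the codewords $(a^m)$ and $\mathbf 0$ are both at distance $m-2$, whereas from $w$ only $\mathbf 0$ is. This contradicts $2$-regularity, so $q = 2$. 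The main obstacle is getting the level-$1$ argument genuinely airtight — ensuring the counting forces $|C| = q$ rather than merely $|C| \leq q$ — and here Lemma \ref{restrictq} (or a direct double count of adjacent pairs between $C$ and $C_1$, using that $1$-regularity makes $|\Gamma_1(\gamma) \cap C_1|$ constant over $\gamma \in C$) supplies the reverse bound.
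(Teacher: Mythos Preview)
Your first paragraph (the containment $C'\subseteq\Rep(m,q)$) is correct and is exactly what the paper does.

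The $1$-regular argument, however, is not completed: you identify the right invariant $|\Gamma_{m-1}(v)\cap C|$ for $v\in C_1$, but then retreat into a vague double-count and explicitly flag the step as an ``obstacle''. The paper's argument here is much more direct and you were one sentence away from it. Assume $|C'|<q$; since $(a^m)\in C'$, there is some $b\in Q\setminus\{0,a\}$ with $(b^m)\notin C'$. Take $\nu_1=(a,0^{m-1})$ and $\nu_2=(b,0^{m-1})$, both in $C'_1$. The only element of $\Rep(m,q)$ at distance $m-1$ from $\nu_1$ is $(a^m)$ (when $m\geq 3$), so $|\Gamma_{m-1}(\nu_1)\cap C'|=1$; the only element of $\Rep(m,q)$ at distance $m-1$ from $\nu_2$ is $(b^m)\notin C'$, so $|\Gamma_{m-1}(\nu_2)\cap C'|=0$. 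This violates $1$-regularity. No global counting is needed.

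In the $2$-regular paragraph your strategy is the paper's, but the distances are miscomputed: you assert that $\mathbf{0}$ is at distance $m-2$ from $w'=(a,a,0^{m-2})$ and from $w=(a,b,0^{m-2})$, whereas in fact $d(\mathbf{0},w)=d(\mathbf{0},w')=2$. The correct count is $|\Gamma_{m-2}(w')\cap C|=1$ (only $(a^m)$) while $|\Gamma_{m-2}(w)\cap C|=0$ (since $(a^m),(b^m)$ are at distance $m-1$ and all other codewords are at distance $2$ or $m$). With $m\geq 5$ these really are distinct values, so the contradiction stands --- but as written your sentence is false.
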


\begin{proof} Let $0,a\in Q$.  By Lemma \ref{favvertex}, there is a
  code $C'$ equivalent to $C$ which contains $\alpha=(0,\ldots,0)$ and
  $\beta=(a,\ldots,a)$, and each $\gamma\in
  C'\backslash\{\alpha,\beta\}$ at distance $\delta=m$ from $\alpha$
  is of the form $(b,\ldots,b)$ for some $b\in
  Q\backslash\{0,a\}$. As $C$ has $\delta=m$, it follows that $C'$ is a subset of the repetition code
  $\Rep(m,q)$.  We note this implies $|C|=|C'|\leq q$.    

  Assume $C$, and hence $C'$, is $1$-regular.  Suppose
  $|C'|<q$.  Then there exists $b\in Q\backslash\{0,a\}$ such that $b$ does not appear in any codeword of
  $C'$.  Let $\nu_1=(a,0,\ldots,0)$ and $\nu_2=(b,0,\ldots,0)$.  Then
  $\nu_1,\nu_2\in C'_1$.  It follows that $|\Gamma_{m-1}(\nu_1)\cap
  C'|=2$ if $m=2$, and $1$ if $m\geq 3$, while
  $|\Gamma_{m-1}(\nu_2)\cap C'|=1$ if $m=2$ and $0$ if $m\geq 3$, which
  is a contradiction.  Therefore $|C'|=q$ and $C'=\Rep(m,q)$.      

Now assume that $m\geq 5$ and $C$, and hence $C'$, is $2$-regular.  
Let $\nu_3=(a,a,0^{m-2})$.  As $\nu_3$ has weight $2$ and $m\geq 5$, we have that $\nu_3\in C'_2$.
Also, $d(\nu_3,\beta)=m-2$.  Therefore, because $C'$ is $2$-regular,
$\Gamma_{m-2}(\nu)\cap C'\neq\emptyset$ for all $\nu\in C'_2$.  Now
suppose that $q\geq 3$ and $b\in Q\backslash\{0,a\}$.  Then 
$\nu_4=(b,a,0^{m-2})\in C'_2$.  Let $\gamma=(c,\ldots,c)$, an arbitrary 
element of $C'$. 
Then \[d(\nu_4,\gamma)=\left\{\begin{array}{cl} 2&\textnormal{if
  $c=0$,}\\ m-1&\textnormal{if 
$c=a$ or $b$,}\\
m&\textnormal{if $c\in
  Q\backslash\{0,a,b\}$.}\end{array}\right.\]  Since $m\geq 5$ it
follows that $\Gamma_{m-2}(\nu_4)\cap C=\emptyset$ which is a
contradiction.  Therefore $q=2$.     
\end{proof}

\begin{lemma}\label{codesize2} Let $C$ be a $1$-regular code in $H(m,q)$ with $|C|=2$.
  Then $\delta=1$ or $m$, and $q=2$.  Moreover, if $\delta=m$ then $C$
  is equivalent to the binary repetition code $\Rep(m,2)$.     
\end{lemma}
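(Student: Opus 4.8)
The plan is to normalise the code and then read off everything from the intersection numbers of two carefully chosen vertices of $C_1$. First I would use Lemma \ref{favvertex}, together with the fact that $1$-regularity passes to equivalent codes, to replace $C$ by an equivalent code $\{\alpha,\beta\}$ with $\alpha=\mathbf 0$ and $\beta=(a^\delta,0^{m-\delta})$ for some $a\in Q\backslash\{0\}$ (since $|C|=2$ we have $\Diff(\alpha,\beta,C)=\{\beta\}$, so condition (ii) of that lemma applies to the single codeword $\beta$). For $\gamma\in C_0=C$ one computes $|\Gamma_k(\gamma)\cap C|=1$ for $k\in\{0,\delta\}$ and $0$ otherwise, whether $\gamma=\alpha$ or $\gamma=\beta$, so the $i=0$ part of $1$-regularity is vacuous and all the leverage must come from $C_1$.

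Next I would show $\delta\in\{1,m\}$ by contradiction: assume $2\leq\delta\leq m-1$ (so in particular $m\geq 3$). Take the weight-one vertices $\nu_1=(a,0^{m-1})$ and $\nu_2$ having $a$ in coordinate $m$. Since $\delta\geq 2$ we get $d(\nu_1,\alpha)=1$ and $d(\nu_1,\beta)=\delta-1\geq 1$, so $\nu_1\in C_1$; since $\delta\leq m-1$, coordinate $m$ lies outside $\supp(\beta)$, so $d(\nu_2,\alpha)=1$ and $d(\nu_2,\beta)=\delta+1$, so $\nu_2\in C_1$. Now $|\Gamma_k(\nu_1)\cap C|$ is nonzero exactly for $k\in\{1,\delta-1\}$ while $|\Gamma_k(\nu_2)\cap C|$ is nonzero exactly for $k\in\{1,\delta+1\}$: when $\delta\geq 3$ these index sets differ, and when $\delta=2$ the first vector reads $2$ at $k=1$ whereas the second reads $1$ at $k=1$ and $1$ at $k=3$. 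In either case $C$ is not $1$-regular, a contradiction, so $\delta=1$ or $\delta=m$.

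It then remains to pin down $q$ in each surviving case. If $\delta=m$, Lemma \ref{sizeofcode} (applicable since $|C|\geq 2$, $\delta=m$ and $C$ is $1$-regular) already gives that $C$ is equivalent to $\Rep(m,q)$; comparing cardinalities forces $q=|\Rep(m,q)|=|C|=2$, so $C$ is equivalent to $\Rep(m,2)$, the binary repetition code, which settles the ``moreover'' clause. If $\delta=1$, I would again argue by contradiction: supposing $q\geq 3$, choose $b\in Q\backslash\{0,a\}$; then $\gamma_1=(b,0^{m-1})\in C_1$ with $|\Gamma_1(\gamma_1)\cap C|=2$, whereas the weight-one vertex $\gamma_2$ with $a$ in coordinate $2$ (which exists as $m\geq 2$) lies in $C_1$ with $|\Gamma_1(\gamma_2)\cap C|=1$; this contradicts $1$-regularity, so $q=2$ here too.

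The only step requiring any care is the construction of the test vertices in $C_1$: one must check each is at distance \emph{exactly} $1$ from $C$ rather than lying in $C$, which is precisely where the hypotheses $m\geq 2$ (and the observation that $2\leq\delta\leq m-1$ forces $m\geq 3$) are needed. This is routine Hamming-distance bookkeeping, and I do not anticipate a genuine obstacle.
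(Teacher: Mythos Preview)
Your proposal is correct and follows essentially the same approach as the paper: normalise via Lemma~\ref{favvertex}, compare intersection numbers of two weight-one vertices in $C_1$ to rule out $2\leq\delta\leq m-1$, and invoke Lemma~\ref{sizeofcode} for the $\delta=m$ case. The only cosmetic differences are your choice of coordinate for the second test vertex ($m$ rather than $\delta+1$) and your use of $a$ rather than $b$ in the second coordinate for the $\delta=1$ argument; both give identical distance patterns to the paper's choices.
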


\begin{proof}  Firstly suppose that $\delta<m$.  By Lemma
  \ref{favvertex}, $C$ is equivalent
  to $$C'=\{(0,\ldots,0),(a^\delta,0^{m-\delta})\}$$ for 
  some $a\in Q\backslash\{0\}$, and $C'$ is $1$-regular since $C$ is.
  Suppose $\delta\geq 2$, and let $\nu_1=(a,0^{m-1})$ and
  $\nu_2=(0^{\delta},a,0^{m-\delta-1})$.  Since $2\leq\delta< m$ it  
  follows that $\nu_1,\nu_2\in C'_1$.  However, we  
  observe that if $\delta\geq 3$ then $|\Gamma_{\delta-1}(\nu_1)\cap
  C'|=1$ and $|\Gamma_{\delta-1}(\nu_2)\cap C|=0$, while if $\delta=2$
  then $|\Gamma_1(\nu_1)\cap C'|=2$ and
  $|\Gamma_1(\nu_2)\cap C'|=1$, contradicting the fact that
  $C'$ is $1$-regular.  Thus $\delta=1$.  Now suppose that $q\geq 3$ and $b\in 
  Q\backslash\{0,a\}$.  Let $\nu_3=(b,0^{m-1})$ and
  $\nu_4=(0,b,0^{m-2})$.  Then $\nu_3,\nu_4\in C'_1$.  However,
  $|\Gamma_1(\nu_3)\cap C'|=2$ and $|\Gamma_1(\nu_4)\cap C'|=1$,
  contradicting the fact that $C'$ is $1$-regular.  Thus $q=2$.  Now
  suppose that $\delta=m$.  Then Lemma \ref{sizeofcode} implies that
  $C$ is equivalent to the repetition code $\Rep(m,q)$.  Since
  $|\Rep(m,q)|=q$ and $|C|=2$, it follows that $q=2$.    
\end{proof}

\begin{lemma}\label{c=2d=m} Let $C$ be a completely regular code in
  $H(m,q)$ with $m\geq 5$ and $\delta\geq 2$.  Then $|C|=2$ if and
  only if $\delta=m$.   
\end{lemma}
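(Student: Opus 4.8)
The plan is to prove the two implications separately, using the structural results already established for codes with minimum distance $m$. For the forward direction, suppose $|C|=2$; I would invoke Lemma~\ref{codesize2}. A completely regular code is in particular $1$-regular, so Lemma~\ref{codesize2} applies and gives $\delta=1$ or $\delta=m$ together with $q=2$. The hypothesis $\delta\geq 2$ then forces $\delta=m$, which is exactly what is wanted. This direction is essentially immediate once we recognise that the hypotheses of Lemma~\ref{codesize2} are met.

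For the reverse direction, suppose $\delta=m$. Since $m\geq 5$ and $C$ is completely regular, it is in particular $2$-regular, so the last clause of Lemma~\ref{sizeofcode} applies: there is a code $C'$ equivalent to $C$ with $C'=\Rep(m,2)$. Equivalent codes have the same cardinality, so $|C|=|C'|=|\Rep(m,2)|=2$ by Definition~\ref{repcode}. This closes the equivalence.

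The main thing to be careful about — rather than a genuine obstacle — is checking that the quantitative hypotheses line up: Lemma~\ref{sizeofcode}'s $2$-regular clause needs $m\geq 5$, which we have, and Lemma~\ref{codesize2} needs only $|C|=2$ and $1$-regularity, both of which follow from our assumptions. One should also note at the outset that a completely regular code (covering radius $\rho$, so $\rho$-regular) is $s$-regular for every $s\leq\rho$; since $\rho\geq 1$ always (indeed $\rho\geq\lfloor(\delta-1)/2\rfloor\geq 2$ here when $\delta\geq 5$, and in any case $\rho\geq 1$ whenever $|C|<q^m$), both the $1$-regular and $2$-regular specialisations are legitimate. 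With these observations in place the proof is a two-line citation of the two preceding lemmas in the two directions, so I do not anticipate any real difficulty; the work has all been front-loaded into Lemmas~\ref{sizeofcode} and~\ref{codesize2}.
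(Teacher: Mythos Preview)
Your proposal is correct and takes essentially the same approach as the paper: the forward direction invokes Lemma~\ref{codesize2} via $1$-regularity to force $\delta=m$, and the reverse direction uses the $2$-regular clause of Lemma~\ref{sizeofcode} (noting $\delta=m\geq 5$ gives $\rho\geq 2$) to identify $C$ with $\Rep(m,2)$.
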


\begin{proof}  Suppose that $|C|=2$.  Since $m\geq 5$ it follows that
  $C$ is $1$-regular.  As $\delta\geq 2$, Lemma \ref{codesize2} implies that $\delta=m$.  Conversely suppose that $\delta=m$.
  As $C$ is completely regular, $m\geq 5$ and $\delta=m$ it follows
  that $\rho\geq 2$, and so $C$ is $2$-regular.  Therefore Lemma
  \ref{sizeofcode} implies that $C$ is equivalent to $\Rep(m,2)$.  
  Thus $|C|=2$.     
\end{proof} 

\subsection{t-designs and $q$-ary t-designs}\label{sectdes}

Let $\mathcal{D}=(\pt,\B)$ where $\pt$ is a set of \emph{points} of
cardinality $m$, and $\B$ is a set of $k$-subsets of $\pt$ called
blocks.  We say $\mathcal{D}$ is a \emph{$t-(m,k,\lambda)$ design} if
every $t$-subset of $\pt$ is contained in exactly $\lambda$ blocks of
$\B$.  We let $b$ denote the number of blocks in $\mathcal{D}$ and $r$
denote the number of blocks that contain any given point.  We say a
non-negative integer $\ell$ is \emph{block intersection number} of
$\mathcal{D}$ if there exist distinct blocks $B,B'\in\B$ such that
$|B\cap B'|=\ell$.  An \emph{automorphism of $\mathcal{D}$} 
is a permutation of $\mathcal{P}$ that preserves $\mathcal{B}$, and we let
$\Aut(\mathcal{D})$ denote the group of automorphisms of
$\mathcal{D}$.  For further concepts and definitions about $t$-designs, 
see \cite{camvan}.    

\begin{remark}\label{desrem}  Let $\pt$ be a set with cardinality $m$
  and $G\leq\sym(\pt)$.  Suppose $G$ acts $t$-homogeneously on $\pt$,
  and let $B\in \pt^{\{k\}}$.  Then $(\pt,B^G)$ forms a
  $t-(m,k,\lambda)$ design for some integer $\lambda$.  Using this
  fact, we can prove that $\PSL(2,5)$ has two orbits, $\mathcal{O}_1$,
  $\mathcal{O}_2$, on $M^{\{3\}}$ (here $M=\{1,\ldots,6\}$), each of
  which is a $2-(6,3,2)$ design, and each is the complementary design
  of the other (see \cite[Sec. 2.4 and Lem. 9.1.1]{ngthesis}).
  Also, any design with these parameters is unique up to isomorphism
  and has automorphism group isomorphic to $\PSL(2,5)$.           
\end{remark} 

For $\alpha,\beta\in V(\Gamma)$, we say $\alpha$ is \emph{covered} by $\beta$ if $\alpha_i=\beta_i$ for
each non-zero component $\alpha_i$ of $\alpha$.  Let $\mathcal{D}$ be a non-empty set of vertices of weight $k$ in
$H(m,q)$.   Then we say $\mathcal{D}$ is a \emph{$q$-ary
  $t$-$(m,k,\lambda)$ design} if for every vertex $\nu$ of weight $t$,
there exist exactly $\lambda$ vertices of $\mathcal{D}$ that cover
$\nu$.  If $q=2$, this definition coincides with the usual definition
of a $t$-design, in the sense that the set of blocks of the $t$-design
is the set of supports of vertices in $\mathcal{D}$, and as such we simply 
refer to $2$-ary $t$-designs as $t$-designs.  It is known that for a completely 
regular code $C$ in $H(m,q)$ with zero codeword and minimum distance $\delta$, the set $C(k)$ of 
codewords of weight $k$ forms a $q$-ary $t-(m,k,\lambda)$ design for some $\lambda$ with
$t=\lfloor\frac{\delta}{2}\rfloor$ \cite{ufpc1}.  Using this, we prove the following results.

\begin{lemma}\label{cgeqm1} Let $C$ be a completely regular code in
  $H(m,2)$ with $|C|\geq 2$ and $5\leq\delta<m$.  Then $|C|\geq m+1$.     
\end{lemma}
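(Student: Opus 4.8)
The plan is to exploit the $t$-design structure of the minimum-weight codewords together with Fisher's inequality. First I would apply Lemma \ref{favvertex} with $a$ the distinguished symbol $0$: since $|C|\geq 2$ there is a pair of codewords at distance $\delta$, so we may replace $C$ by an equivalent code — this preserves complete regularity, minimum distance and cardinality — and assume that ${\bf 0}\in C$ and, as $q=2$, that $\beta:=(1^\delta,0^{m-\delta})\in C$. Put $t:=\lfloor\delta/2\rfloor$; the hypothesis $\delta\geq 5$ is used precisely to guarantee $t\geq 2$.

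Next I would invoke the result of \cite{ufpc1} quoted above: since $C$ is completely regular with zero codeword and minimum distance $\delta$, the set $C(\delta)$ of weight-$\delta$ codewords forms a $q$-ary $t$-$(m,\delta,\lambda)$ design, and it is non-empty because $\beta\in C(\delta)$, whence $\lambda\geq 1$. Since $q=2$, a weight-$\delta$ vertex of $H(m,2)$ is determined by its support, so the number $b$ of blocks of this design equals $|C(\delta)|$; and a $t$-design with $t\geq 2$ and $\lambda\geq 1$ is in particular a $2$-design of positive index. Its block size $\delta$ is strictly smaller than the number of points $m$, so Fisher's inequality (see \cite{camvan}) yields $b\geq m$. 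Finally, ${\bf 0}$ has weight $0\neq\delta$, hence lies in $C\backslash C(\delta)$, and therefore $|C|\geq |C(\delta)|+1=b+1\geq m+1$.

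I do not anticipate a genuine obstacle here; the argument is short. The only points needing attention are checking that the hypotheses of the design theorem of \cite{ufpc1} are met after the reduction — in particular that $C(\delta)\neq\emptyset$ forces $\lambda\geq 1$ — and verifying the hypotheses of Fisher's inequality, namely $\lambda\geq 1$ and block size strictly less than the number of points, the latter being exactly the standing assumption $\delta<m$. A route through the MacWilliams transform and positivity of the Krawtchouk expansion looks more cumbersome, so I would not pursue it.
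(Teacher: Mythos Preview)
Your proposal is correct and follows essentially the same route as the paper: pass to an equivalent code containing $\mathbf{0}$, observe that $C(\delta)$ is a $2$-$(m,\delta,\lambda)$ design (the paper cites \cite[Cor.~1.6]{camvan} rather than \cite{ufpc1}, but the content is the same), apply Fisher's inequality using $\delta<m$, and add one for the zero codeword. Your explicit use of Lemma~\ref{favvertex} to place a weight-$\delta$ codeword in the code is a nice touch that makes the nonemptiness of $C(\delta)$ immediate, whereas the paper leaves this implicit (it follows from $0$-regularity).
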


\begin{proof}  $C$ is equivalent to a completely regular
  code $C'$ that contains $\z$.  As $\delta\geq 5$, it follows that
  $C'(\delta)$ is a $2$-$(m,\delta,\lambda_2)$ design for some
  $\lambda_2$ \cite[Cor. 1.6]{camvan}.  Since $\delta<m$, 
  Fisher's inequality \cite[Thm. 1.14]{camvan} implies that
  $|C'(\delta)|\geq m$.  Consequently $|C|=|C'|\geq m+1$.        
\end{proof}

\begin{lemma}\label{nonexistcr}  There do not exist binary completely
  regular codes of length $m$ with minimum distance $\delta$ for
  $m=13$ and $\delta=5,6$, or for $m=16$ and $\delta=5,7,8$.  
\end{lemma}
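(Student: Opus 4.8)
The plan is to argue by contradiction: suppose that for one of the listed pairs $(m,\delta)$ there is a binary completely regular code $C$ of length $m$ with minimum distance $\delta$. Since complete regularity is preserved by equivalence (as noted after Definition~\ref{cregdef}), I may replace $C$ by an equivalent code containing $\z$; then, taking $i=0$ in Definition~\ref{cregdef}, $|\Gamma_k(c)\cap C|$ is independent of $c\in C$, so $C$ is distance invariant and its distance distribution equals its weight distribution, $a_i=|C(i)|$, with $a_0=1$ and $a_i=0$ for $1\le i\le\delta-1$. In every listed case $\delta<m$, so Lemma~\ref{cgeqm1} gives $|C|\ge m+1$, and distance invariance forces a codeword of weight exactly $\delta$, whence $C(\delta)\ne\emptyset$. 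By the result of \cite{ufpc1} quoted above, the supports of the weight-$\delta$ codewords form a $t$-$(m,\delta,\lambda)$ design $\mathcal{D}$ with $t=\lfloor\delta/2\rfloor$ and $\lambda\ge 1$, and more generally each $C(w)$ is such a $t$-design.

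Four of the five cases are then closed by combining the divisibility conditions on $\mathcal{D}$ with standard bounds on $|C|$. Recall that a $t$-$(v,k,\lambda)$ design is also an $s$-$(v,k,\lambda_s)$ design with $\lambda_s=\lambda\binom{v-s}{t-s}/\binom{k-s}{t-s}$; requiring all $\lambda_s$ (in particular the block number $b=\lambda_0$) to be non-negative integers bounds $\lambda$, hence $|C(\delta)|=b$, from below. For $(m,\delta)=(13,6)$ (so $t=3$) this forces $|C(\delta)|\ge 286$, contradicting the sphere-packing bound $|C|\le\lfloor 2^{13}/(1+13+\binom{13}{2})\rfloor=89$; for $(16,8)$ ($t=4$) it forces $|C(\delta)|\ge 390$, contradicting the Plotkin bound $|C|\le 32$. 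For $(16,7)$ ($t=3$) divisibility gives $|C(\delta)|\ge 80$, and since $\delta=7$ forces $|B\cap B'|\le\lfloor\delta/2\rfloor=3$ for distinct blocks while the incidence count $\sum_{\{B,B'\}}|B\cap B'|=m\binom{r}{2}$ over the $\binom{b}{2}$ pairs of blocks makes the average intersection exceed $3$, we again obtain a contradiction. For $(13,5)$ ($t=2$), where likewise $|B\cap B'|\le 2$, a double count of the pairs $(\{p,q\},\{B,B'\})$ with $\{p,q\}\subseteq B\cap B'$ yields $\binom{m}{2}\binom{\lambda}{2}\le\binom{b}{2}$, which is incompatible with the divisibility condition $5\mid\lambda$.

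The remaining case $(m,\delta)=(16,5)$ is the main obstacle: here the block-intersection count (again $|B\cap B'|\le 2$) only restricts $\lambda$, hence $|C(\delta)|$, to a short list of values rather than producing an outright contradiction. To close it I would invoke the nonnegativity of the MacWilliams transform $a'(C)$ — equivalently, the Delsarte linear-programming bound for binary $(16,|C|,5)$ codes whose weight support avoids $\{1,2,3,4\}$ — together with the $2$-design conditions on the other weight classes, and, if these do not suffice on their own, the rigidity of the equitable distance partition of $C$: the intersection numbers satisfy $c_1=1$, $b_1=m-1$, $c_2=2$ (forced by $\delta\ge5$), the class sizes are $|C_i|=|C|\,b_0\cdots b_{i-1}/(c_1\cdots c_i)$, and $\sum_i|C_i|=2^{16}$ then imposes a divisibility relation between $|C|$ and the intersection numbers which, combined with the monotonicity of the $b_i$ and $c_i$ and the few possibilities for the covering radius, should be impossible. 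Failing a clean argument, the nonexistence in this case can be read off from the parameter tables for completely regular codes and their distance-regular quotients in \cite{distreg}.
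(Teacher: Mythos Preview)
Your treatment of the four cases $(13,5)$, $(13,6)$, $(16,7)$, $(16,8)$ is correct and in fact more explicit than the paper's own proof, which simply refers these cases to \cite{nonexist2} and \cite{ngthesis}. The divisibility computations are right, and the block--intersection counts go through: for $(16,7)$ with $r=7\lambda$, $b=16\lambda$, the inequality $m\binom{r}{2}>3\binom{b}{2}$ reduces to $\lambda>4$, which holds since $5\mid\lambda$; for $(13,5)$ with $b=39\lambda/5$, the double count $\binom{13}{2}\binom{\lambda}{2}\le\binom{b}{2}$ reduces to $11\mu\le 9$ where $\lambda=5\mu$, a contradiction. These are clean, self-contained arguments.

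The genuine gap is $(16,5)$. Here you correctly identify that it is the hard case, but what you write is a programme, not a proof: ``I would invoke \dots'', ``if these do not suffice \dots'', ``should be impossible'', and finally a fallback to tables in \cite{distreg}. None of these is carried out, and the fallback is not reliable: the tables in \cite{distreg} classify feasible intersection arrays for distance-regular graphs, not completely regular codes in Hamming graphs, so there is no table entry to read off. Moreover, the LP inequalities together with the $2$-design divisibilities on the weight classes are \emph{not} by themselves enough to reach a contradiction --- one needs additional structural input.

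The paper's argument for $(16,5)$ supplies exactly that missing structure. First, divisibility ($r=15\lambda/4$) and the pairwise-intersection bound $|B\cap B'|\le 2$ pin down $\lambda=4$, so $a_5=48$; the intersection numbers $0,1,2$ then force $C(6),C(8),C(10)\ne\emptyset$. The key step is a dichotomy on the all-one vector: if $\1\notin C$ then, by \cite[Lem.~2.2]{ngalone}, $C_\rho=\1+C$ with $\rho\ge\delta-1=4$, and since $C(10)\ne\emptyset$ one gets $\rho\le 6$; the cases $\rho=4,5$ are killed by the partition count $\sum|C_i|=2^{16}$, and for $\rho=6$ the weight support of $C$ is contained in $\{0,5,\ldots,10\}$, after which two specific MacWilliams inequalities force $a_8\le 15$, contradicting the $2$-design lower bound $a_8\ge 30$. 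If $\1\in C$ then $C$ is antipodal, $a_i=a_{16-i}$, and a direct contradiction among the MacWilliams inequalities finishes the case. You should either reproduce this dichotomy (the result from \cite{ngalone} is the ingredient your sketch is missing) or replace the last paragraph of your proposal with an explicit computation rather than a list of possible strategies.
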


\begin{proof}  Let $C$ be a binary completely regular code of length $16$
  with $\delta=5$.  By replacing $C$ with an equivalent code if
  necessary, we can assume that $\z\in C$.  Therefore $C(5)$
  forms a $2-(16,5,\lambda)$ design for some $\lambda$.  It follows that $r=15\lambda/4$, 
  and so $4$ divides $\lambda$.  There are exactly $\lambda$ codewords of
  weight $5$ whose support contains $\{1,2\}$.  Because $\delta=5$, it
  follows that the supports of any pair of these codewords intersect
  precisely in $\{1,2\}$.
  Consequently $$\lambda\leq\frac{16-2}{5-2}<5,$$ and so $\lambda=4$.
  Thus $C(5)$ forms a $2-(16,6,4)$ design and $a_5=|C(5)|=48$.  Using the fact
  that $\delta=5$, a simple counting argument gives that 
  $C(5)$ has block intersection numbers $2,1$ and $0$.  Consequently, for $\alpha\in C(5)$, 
  it holds that $\Gamma_k(\alpha)\cap C(5)\neq\emptyset$ for $k=6,8,10$, and so $C(k)\neq\emptyset$ for the same
  values of $k$.  

  Suppose that the all one vertex $\1$ is not a codeword.  Then, by \cite[Lemma 2.2]{ngalone}, 
  $C$ has covering radius $\rho\geq\delta-1=4$ and $C_\rho=\1+C$.  
  Furthermore, because $C(10)\neq\emptyset$ and $\1\in C_\rho$, it follows that $\rho\leq 6$.
  It is known that $C_\rho$ is also completely regular with distance partition $\{C_\rho,C_{\rho-1},\ldots,C_1,C\}$ \cite{neu}.  
  Thus, as $\delta=5$, it follows that $C_{\rho-i}=\1+C_i$ for $i=1, 2$ also.  Therefore if $\rho=4$ or $5$ it holds
  that $|C|(2+2\times 16+\binom{16}{2})=2^{16}$ or $|C|(2+2\times 16+2\times\binom{16}{2})=2^{16}$ respectively, 
  which is a contradiction.  Hence $\rho=6$.  As $\1\in C_6$, it follows that $k=10$ is the
  maximum weight of any codeword in $C$.  Thus the distance distribution of $C$ is equal to 
  $$a(C)=(1,0,0,0,0,48,a_6,a_7,a_8,a_9,a_{10},0,0,0,0,0,0)$$ By
  considering the MacWilliams transform of $a(C)$ (see (\ref{kracheqn})), we obtain
  the following linear constraints \cite[Lem. 5.3.3]{vanlint}: 
\begin{align*}
600-6a_7-8a_8-6a_9&\geq 0\\
-360+6a_7-8a_8+6a_9&\geq 0
\end{align*}
with $a_7,a_9\geq 0$ and $a_8>0$.  Adding these together implies that $a_8\leq 15$.  However, there exists a positive integer $\lambda'$
such that $C(8)$ forms a $2-(16,8,\lambda')$ design with
$$a_8=|C(8)|=\frac{16.15}{8.7}\lambda'=\frac{30}{7}\lambda'.$$
Thus $7$ divides $\lambda'$ and $a_8\geq 30$, which is a contradiction.  Hence $\1\in C$.
This implies that $C$ is \emph{antipodal}, that is, $\alpha+\1\in C$ for all $\alpha\in C$,
and so $a_i=a_{m-i}$ for all $i$ in $a(C)$.  Again, by applying the MacWilliams transform to $a(C)$ we
  generate twelve linear constraints that must be non-negative.  However, it is straight forward to
  obtain a contradiction from these constraints (see \cite[Lem. 7.4.2.2]{ngthesis}).  Thus no such code exists with 
  $m=16$ and $\delta=5$.  For the other values of $m$, $\delta$, we 
  follow a similar argument to that given in \cite[Lem. 6]{nonexist2}
  to prove that binary completely regular codes with these parameters 
  do not exist (see \cite[Lem. 7.4.2.1]{ngthesis}).
\end{proof}

\section{Basic Cases}\label{secamsm}

We now begin to prove Theorem \ref{maink=1}.  We first consider the
case $X\cong A_m$ or $S_m$, and then the case $\delta=m$.  

\begin{remark} If $C$ is an $X$-completely transitive code, then $C$ is 
  completely regular \cite{giupra}.  Furthermore, if $\delta\geq 5$ then $C$ has covering radius 
  $\rho\geq 2$.  Thus $C$ is at least $(X,2)$-neighbour transitive, so by Proposition
  \ref{ihom}, $X_\alpha$ acts $2$-homogeneously on $M$.  As we only consider completely
  transitive codes with $\delta\geq 5$ for the remainder of this paper, 
  from now on we use both these results without further reference.      
\end{remark}

\begin{proposition}\label{xcongsm} Let $C$ be an $X$-completely transitive code in
  $H(m,q)$ with $|C|>1$, $X\cap\base=1$, $X \cong A_m$ or $S_m$ and $\delta \geq
  5$. Then $q=2$, $X \cong S_m$, $X_{\alpha} \cong A_m$ and $C$ is
  equivalent to $\Rep(m,2)$.
\end{proposition}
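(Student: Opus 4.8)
The plan is to exploit the tension between $X$ having a faithful action on $M$ that is at least $2$-homogeneous (Proposition~\ref{ihom}) and the size constraint coming from Lemma~\ref{restrictq}. Since $X \cong A_m$ or $S_m$ acts faithfully on $M$ with $|M| = m$, the only such action (for $m \geq 5$) is the natural one, so $X \lesssim S_m$ in the sense of Lemma~\ref{restrictq}. First I would dispose of small $m$: for $m \leq 4$ there is little room and one checks directly (or invokes $\delta \geq 5 > m$ forcing $\delta = m$, handled below). For $m \geq 5$, Lemma~\ref{restrictq} gives $q \leq m-2$. The heart of the argument is then to show $\delta = m$, after which Lemma~\ref{c=2d=m} (or Lemma~\ref{sizeofcode} together with $2$-regularity) forces $|C| = 2$, $q = 2$, and $C \equiv \Rep(m,2)$, and finally Example~\ref{exk=1} together with the structure of $\Aut(\Rep(m,2))$ pins down $X \cong S_m$ and $X_\alpha \cong A_m$.

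To force $\delta = m$, suppose instead $5 \leq \delta < m$. We know $X$ is $2$-transitive on $M$ (Corollary~\ref{2transM}), which for $X = A_m$ or $S_m$ is automatic, so that gives nothing new directly; instead the leverage is the $q$-ary design structure. After replacing $C$ by an equivalent code containing $\z$, the weight-$\delta$ codewords $C(\delta)$ form a $q$-ary $t$-$(m,\delta,\lambda)$ design with $t = \lfloor \delta/2 \rfloor \geq 2$. The key point is that $X_\alpha \cong A_m$ (the stabiliser of $\z$ must be $A_m$ or $S_m$, but it cannot be all of $X$ since $X$ is transitive on the nontrivial codeword(s), so $X_\alpha \cong A_m$ and $|C|$ divides $[S_m : A_m] = 2$, whence $|C| = 2$) acts on the supports of codewords. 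With $|C| = 2$ and $\delta < m$, Lemma~\ref{codesize2} immediately gives a contradiction unless $\delta = m$ — indeed Lemma~\ref{codesize2} says a $1$-regular code of size $2$ has $\delta \in \{1, m\}$, and $\delta \geq 5$ rules out $\delta = 1$. So in fact once we establish $|C| = 2$ we are essentially done. The main obstacle is therefore the clean deduction that $|C| = 2$: one must argue that $X_\alpha \neq X$, i.e. that $X$ does not fix the zero codeword; this holds because $X$ is transitive on $C = C_0$ and $|C| \geq 2$, so $X$ moves $\z$, forcing $X_\alpha$ to be a proper subgroup of $X \cong A_m$ or $S_m$ that still acts $2$-homogeneously (indeed $\delta \geq 5$ gives $\lfloor(\delta-1)/2\rfloor \geq 2$) on $M$ — for $m \geq 5$ the only index-$\leq$-small subgroups of $S_m$ with this property are $A_m$ itself (index $2$) or $S_m$; an index-$2$ subgroup of $A_m$ would not be $2$-homogeneous. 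Hence $X \cong S_m$, $X_\alpha \cong A_m$, and $|C| = 2$.

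With $|C| = 2$ in hand: Lemma~\ref{c=2d=m} (applicable since $m \geq 5$, $\delta \geq 2$) gives $\delta = m$ and $C$ equivalent to $\Rep(m,2)$, so $q = 2$. It remains to confirm $X \cong S_m$ with $X_\alpha \cong A_m$ is consistent and forced: $\Aut(\Rep(m,2)) \cong S_2 \times S_m$ by the cited fact in Example~\ref{exk=1}, and any $X \leq \Aut(\Rep(m,2))$ with $X \cap \base = 1$ and $X$ acting $2$-homogeneously on $M$ and transitively on the two codewords must be exactly the group $X$ of Example~\ref{exk=1}, which is $\cong S_m$ with point stabiliser $A_m$. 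I would also handle the degenerate range $m \leq 4$ separately at the outset, noting that $\delta \geq 5$ combined with $\delta \leq m$ already forces $m \geq 5$, so there is in fact nothing to check there. The only genuine subtlety is the group-theoretic step identifying $X_\alpha$; everything else is an application of the lemmas in Section~\ref{secdef}.
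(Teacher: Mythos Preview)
Your proposal has a genuine gap at the crucial group-theoretic step. You assert that a proper $2$-homogeneous subgroup of $X\cong A_m$ or $S_m$ (acting on $M$ of size $m$) must be $A_m$ itself, and you phrase this as ``the only index-$\leq$-small subgroups of $S_m$ with this property are $A_m$ itself or $S_m$''. This is simply false for many $m$: for example $\psl{2}{5}$ is $2$-transitive of degree $6$ inside $A_6$; $\psl{2}{7}$, $\pgl{2}{7}$ and $\AGL(3,2)$ are $2$-transitive of degree $8$ inside $A_8$ or $S_8$; $M_{11}$ and $\psl{2}{11}$ are $2$-transitive of degree $11$ and $12$; and so on. You have no a priori bound on $|C|=|X:X_\alpha|$ at this point, so ``small index'' has no content. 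Your remark that ``an index-$2$ subgroup of $A_m$ would not be $2$-homogeneous'' is vacuous (for $m\geq 5$, $A_m$ is simple and has no index-$2$ subgroup), and in any case does nothing to exclude the many other $2$-homogeneous proper subgroups.

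The paper's proof handles exactly this difficulty, but only after first pinning down $q=2$ by an independent argument you omit: $X_1\cong A_{m-1}$ or $S_{m-1}$ must act $2$-transitively on $Q$ (Proposition~\ref{x12trans}), and the known $2$-transitive actions of $A_{m-1}$, $S_{m-1}$ of degree $q\leq m-2$ force $X\cong S_m$ and $q=2$ once $m\geq 6$. With $q=2$ in hand one gets $|C|\leq 2^m$, then Mar{\'o}ti's bound $|X_\alpha|\leq 3^m$ (for primitive $X_\alpha\not\geq A_m$) forces $m\leq 13$, and the Sphere Packing Bound sharpens the lower bound on $|X_\alpha|$ enough to exclude, by inspection, every $2$-homogeneous group of degree $m\leq 13$ other than $A_m$ and $S_m$. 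Only then does one conclude $A_m\leq X_\alpha$, hence $|C|=2$. Your outline skips both the reduction to $q=2$ and this entire size-versus-classification argument; as written, the step ``hence $X\cong S_m$, $X_\alpha\cong A_m$, $|C|=2$'' is unjustified.
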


\begin{proof} As $m\geq\delta\geq 5$ the code $C$ is at least
  $2$-regular.  If $m=5$ then by Lemma \ref{sizeofcode}, $q=2$ and 
  $C$ is equivalent to $\Rep(m,2)$.  In this case, since $X$ is
  transitive on $C$, $X_\alpha$ has index $2$ and hence is normal in
  $X$.  Thus $X\cong S_5$ and $X_\alpha\cong A_5$.  Thus we may assume
  that $m\geq 6$.  Since $X\cong A_m$ or $S_m$, it follows that (the
  stabiliser of the first entry) $X_1 \cong A_{m-1}$ or $S_{m-1}$.  By
  Proposition \ref{x12trans}, $X_1$ has a $2$-transitive action of
  degree $q$.  By Lemma \ref{restrictq}, for $m\geq\delta\geq 5$, we
  have that $q\leq m-2$.  Thus, by considering the $2$-transitive
  actions of $A_n$ and $S_n$ for an arbitrary $n$ \cite{cambull}, we
  have, since $m\geq 6$, that $X \cong S_m$ and $q=2$.      

  Now consider the group $X_\alpha$, and suppose first that $A_m$ is
  not a subgroup of $X_\alpha$.  As $q=2$ it follows that
  $|X:X_{\alpha}|=|C| \leq 2^m$, and since $X_{\alpha}$ acts
  $2$-homogeneously and hence primitively on $M$, a result by
  Mar{\'o}ti \cite{maroti} gives us that $|X_{\alpha}|\leq 3^m$. 
  It follows that $m!/2^m=|X|/2^m\leq |X_\alpha|\leq 3^m$.  Thus $m!\leq
  6^m$, which implies that $m\leq 13$.  By the Sphere Packing Bound
  \cite[Thm. 5.2.7]{vanlint}, 
  $|C|(1+m+{m \choose 2}) \leq 2^m$, and so $|X_{\alpha}| \geq
  m!(1+m+{m \choose 2})/2^m$.  Now, from \cite{camperm} and
  \cite{kantor}, the only $2$-homogeneous groups with degree $m\leq 13$ 
  that are not $A_m$ or  $S_m$ are the projective groups, the
  affine groups, $M_{11}$ with degree $11$ or $12$, and $M_{12}$ with
  degree $12$.  However, we see in each case that the orders of these
  groups are always less than $m!(1+m+{m \choose 2})/2^m$, which is a
  contradiction.  Thus $A_m$ is a subgroup of $X_\alpha$.  Since
  $|C|>1$, it follows that $X_\alpha\cong A_m$ and $|C|=2$.  Therefore, by
  Lemmas \ref{codesize2} and \ref{c=2d=m}, $C$ is equivalent to
  $\Rep(m,2)$.        
\end{proof}

\begin{proposition}\label{d=m} Let $C$ be an $X$-completely transitive code with
  $m\geq 5$, $|C|\geq 2$, $X\cap\base=1$ and $\delta=m$.  Then $C$
  is equivalent to the repetition code $\Rep(m,2)$, $X\cong S_m$ and
  $X_\alpha\cong A_m$.        
\end{proposition}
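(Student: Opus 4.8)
The plan is to reduce to the already-established Proposition \ref{xcongsm} by showing that $X\cong A_m$ or $S_m$. Since $\delta=m\geq 5$, the code $C$ is at least $2$-regular (indeed completely regular with $\rho\geq 2$), and by Lemma \ref{sizeofcode} applied with the $2$-regular hypothesis we get $q=2$ and $C$ equivalent to $\Rep(m,2)$, so $|C|=2$. By Remark \ref{remfav} I may assume $\alpha=\z\in C$ and $\beta=(1,\ldots,1)\in C$; then $X_\alpha$ has index $2$ in $X$ (since $X$ is transitive on the two-element set $C$), hence $X_\alpha\trianglelefteq X$. The heart of the argument is to identify $X_\alpha$ as a permutation group of degree $m$ acting on $M$ (recall $X\cap\base=1$, so $X\lesssim S_m$), and the key extra information is that $X_\alpha$ must act transitively on each cell $C_i$ of the distance partition and, by Proposition \ref{ihom}, $2$-homogeneously — hence primitively — on $M$.

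First I would pin down the distance partition. Exactly as in Example \ref{exk=1}, the covering radius is $\rho=\lfloor m/2\rfloor$ and $C_i$ consists precisely of the binary vertices of weight $i$ or $m-i$, for $i=0,\ldots,\rho$. Since $X_\alpha=X_\beta$ fixes both $\z$ and $\1$, Lemma \ref{supplem} shows that the action of $X_\alpha$ on vertices is governed entirely by the induced permutation $\sigma\in S_m$ of the coordinates: a weight-$i$ vertex is sent to a weight-$i$ vertex whose support is the $\sigma$-image of the original support. Therefore $X_\alpha$-transitivity on the weight-$i$ vertices in $C_i$ is equivalent to $X_\alpha$ acting $i$-homogeneously on $M$, for every $i\leq\rho=\lfloor m/2\rfloor$. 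Combined with the fact that $X\setminus X_\alpha$ contains the global complementation $\gamma\mapsto\gamma+\1$ (composed with a coordinate permutation), which swaps weight $i$ with weight $m-i$, the complete transitivity of $C$ forces $X_\alpha$ to be $i$-homogeneous on $M$ for all $i\leq\lfloor m/2\rfloor$.

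Now I would invoke the classification of highly homogeneous permutation groups. A group that is $i$-homogeneous on $m$ points for all $i\leq\lfloor m/2\rfloor$ — in particular $\lfloor m/2\rfloor$-homogeneous — is, by the Livingstone–Wagner theorem together with the classification of $t$-homogeneous groups (see \cite{cambull}, \cite{kantor}), forced to contain $A_m$ once $\lfloor m/2\rfloor\geq 5$, i.e. $m\geq 10$; the finitely many small cases $5\leq m\leq 9$ are handled directly using that $X_\alpha$ must be $\lfloor m/2\rfloor$-homogeneous and primitive, checking the short list of such groups (e.g. for $m=6$ one must rule out $\PSL(2,5)$, which is only $2$-homogeneous not $3$-homogeneous, and for $m=7,8$ the groups $\PSL(2,7)$, $\PSL(2,8)$, $\PGL(2,7)$, $\AGL(1,8)$, $\PGaL(1,8)$, etc., none of which is $3$- or $4$-homogeneous). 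Hence $A_m\leq X_\alpha\leq X\lesssim S_m$, so $X\cong A_m$ or $S_m$. Applying Proposition \ref{xcongsm} now gives $q=2$, $X\cong S_m$, $X_\alpha\cong A_m$, and $C$ equivalent to $\Rep(m,2)$, as required.

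The main obstacle is the low-degree bookkeeping: for $m\in\{6,7,8,9\}$ one cannot merely quote "$A_m\leq X_\alpha$'' from $\lfloor m/2\rfloor$-homogeneity, and one must instead argue — exactly as in the proof of Proposition \ref{xcongsm}, via the orders of the candidate primitive groups of these degrees against the constraint $|X_\alpha|\geq|X|/|C|=m!/2$ — that no proper $2$-homogeneous (indeed $\lfloor m/2\rfloor$-homogeneous) subgroup survives. Since $|C|=2$ here, the bound $|X_\alpha|=m!/2$ is very large and easily excludes every projective, affine, or sporadic $2$-homogeneous group of degree $\leq 9$, so this step, while requiring care, presents no real difficulty.
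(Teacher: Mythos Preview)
Your overall strategy coincides with the paper's: use Lemma \ref{sizeofcode} to reduce to $q=2$ and $C=\Rep(m,2)$, establish high homogeneity of $X_\alpha$ on $M$, invoke the classification to force $A_m\leq X$, and finish via Proposition \ref{xcongsm}. However, your handling of the small-degree cases contains a genuine error.

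For $5\leq m\leq 9$ you appeal to ``the constraint $|X_\alpha|\geq|X|/|C|=m!/2$'', but this is circular: $|X_\alpha|=|X|/2$ is correct, yet there is no reason to assert $|X|=m!$---that is precisely the conclusion $X\cong S_m$ you are trying to reach. (In Proposition \ref{xcongsm} the hypothesis $X\cong A_m$ or $S_m$ supplies this bound; here it does not.) Without it one cannot exclude, for instance, $X\cong\PGL(2,7)$ with $X_\alpha\cong\PSL(2,7)$ for $m=8$: both groups \emph{are} $3$-homogeneous on $8$ points ($\PGL(2,7)$ is sharply $3$-transitive on the projective line), contrary to your parenthetical assertion. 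The paper rules this case out by noting that $X$ must be transitive on $C_4$, the set of $\binom{8}{4}=70$ weight-$4$ vertices, impossible since $5\nmid|\PGL(2,7)|$. The case $m=6$ likewise needs real work: the paper uses the orbit structure of $\PSL(2,5)$ on $M^{\{3\}}$ (Remark \ref{desrem}) to show that $C_3$ is not a single $X$-orbit when $X\cong\PGL(2,5)$.

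A secondary gap: for even $m$ your claim that $X_\alpha$ is $(m/2)$-homogeneous is unjustified. When $i=m/2$ the cell $C_\rho$ consists of a single weight class, and elements of $X\setminus X_\alpha$ map it to itself (complementation preserves weight $m/2$), so transitivity of $X$ on $C_\rho$ does not force transitivity of $X_\alpha$. You only obtain $\lfloor(m-1)/2\rfloor$-homogeneity of $X_\alpha$, exactly what the paper gets via Proposition \ref{ihom}, and the small even cases $m=6,8$ then genuinely require the ad hoc arguments the paper supplies.
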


\begin{proof}  As $C$ is completely regular with $\delta=m\geq 5$,
  it follows that $\rho\geq 2$ and so $C$ is at least $2$-regular.
  Thus, by Lemma \ref{sizeofcode}, $C$ is equivalent to the repetition code
  $\Rep(m,2)$, so we just need to prove the statement about the groups $X$ and $X_\alpha$.  
  By replacing $C$ with an equivalent code if 
  necessary, let us assume that $C=\Rep(m,2)$.  As $|C|=2$
  we have that $|X:X_\alpha|=2$.  Furthermore, by Corollary
  \ref{multtrans}, $X_1$ acts transitively on $C$, and so
  $|X_1:X_{1,\alpha}|=2$.  \emph{We claim that $A_m\lesssim X$, from which,
  by Proposition \ref{xcongsm}, we obtain $X\cong S_m$ and
  $X_\alpha\cong A_m$.}  We repeatedly use the classification of
  $2$-transitive groups to prove this claim (see \cite{camperm}).

  Suppose to the contrary that $A_m\not\lesssim X$.  By Proposition \ref{ihom},
  $X_\alpha$ (and so $X$ also) is $i$-homogeneous on $M$ for all
  $i\leq \lfloor\frac{\delta-1}{2}\rfloor=\lfloor\frac{m-1}{2}\rfloor$, and
  note that any $i$-homogeneous group is also $(m-i)$-homogeneous.  
  By the classification of $2$-transitive groups, $X$ is not
  $6$-transitive (see \cite[Sec. 7.3]{dixmort}), and hence is not
  $6$-homogeneous by \cite{kantor}.  Thus $m\leq 12$ and if $m$ is odd
  then $m=5$, or $m=9$ with $\PGL(2,8)\leq X_\alpha<
  X\leq\PGaL(2,8)$.  However in the latter case $|X:X_\alpha|=1$ or
  $3$, which is a contradiction.  Also, if $m=5$ then, by
  \cite[Thm. 9.4B]{dixmort}, $X\leq Z_5.Z_4$, since
  $A_5\not\lesssim X$, and so $X_\alpha\lesssim D_{10}$, which is not
  $2$-homogeneous, a contradiction.  Thus $m\in \{6,8,10,12\}$ and
  $X$, $X_\alpha$ are $(\frac{m-2}{2})$-homogeneous on $M$.        

  If $m=12$ then $X$, $X_\alpha$ are $5$-transitive by \cite{kantor}, and the only possibility is
  $X\cong M_{12}$, which has no index $2$-subgroup $X_\alpha$.
  Similarly, if $m=10$ then $X$, $X_\alpha$ are $4$-transitive by
  \cite{kantor}, 
  but the only $4$-transitive subgroups of $S_{10}$ are $A_{10}$ and
  $S_{10}$.  Next suppose $m=8$.  In this case $C=\Rep(m,8)$, which
  has covering radius $\rho=4$.  The only $3$-homogeneous subgroup
  $X$ of $S_8$, not containing $A_8$, with a subgroup of index $2$ is
  $X\cong\PGL(2,7)$, with $X_\alpha\cong\PSL(2,7)$.  However, since $C$ is
  $X$-completely transitive, $X$ is transitive on $C_4$, the set of
  $\binom{8}{4}=70$ vertices of weight $4$.  This is impossible since
  $|X|$ is not divisible by $5$.  Thus $m=6$.  

  In this final case, $C=\Rep(2,6)$, which has covering radius
  $\rho=3$, and $C_3$ consists of the $20$ weight $3$ vertices in
  $H(6,2)$.  The only $2$-homogeneous subgroup $X$ of $S_6$, not
  containing $A_6$, with an index $2$ subgroup is $X\cong\PGL(2,5)$, with
  $X_\alpha\cong\PSL(2,5)$.  We note that because $q=2$, it follows
  that $X_\alpha=X\cap \topg\leq\topg$, where $\alpha={\bf{0}}$. Let
  $H=N_{\topg}(X_\alpha)\cong\PGL(2,5)$.  Note also that if $g=(h,\ldots,h)\in\base$ 
  for $1\neq h\in S_2$, then $X\leq\Aut(C)=\langle g,\topg\rangle$.  
  Suppose that $x=g\sigma\in X$ with 
  $\sigma\in X_\alpha$.  Then $x\sigma^{-1}=g\in X$, and so
  $X\cap\base$ is a non-trivial normal $2$-subgroup.  However this
  contradicts the fact $X\cong\PGL(2,5)$.  Therefore we deduce that
  $X=X_\alpha\cup g(H\backslash X_\alpha)$.  By Remark
  \ref{desrem}, the induced action of $X_\alpha$ on $M^{\{3\}}$, the
  set of $3$-subsets of $M$, has two orbits, $\mathcal{O}_1$,  
  $\mathcal{O}_2$.  Moreover, each orbit forms a $2-(6,3,2)$ design
  and is the complementary design of the other.  Also, because $X$
  acts transitively in its induced action on $M^{\{3\}}$, and because  
  $\PSL(2,5)\unlhd \PGL(2,5)$ we have that
  $\Delta=\{\mathcal{O}_1,\mathcal{O}_2\}$ is a system of
  imprimitivity for the action of $X$ on $M^{\{3\}}$.  Let
  $C(\mathcal{O}_i)$ be the set of vertices in $H(6,2)$ whose supports
  are the elements of $\mathcal{O}_i$ for each $i$, so
  $C_3=C(\mathcal{O}_1)\cup C(\mathcal{O}_2)$.  If $x\in X_\alpha$ it
  follows that $C(\mathcal{O}_1)^x=C(\mathcal{O}_1)$.  If $x\in
  X\backslash X_\alpha$ then $x=g\sigma$ with $\sigma\in H\backslash
  X_\alpha$.  It follows that
  $C(\mathcal{O}_2)^\sigma=C(\mathcal{O}_1)$, and because
  $\mathcal{O}_2$ is the complementary design of $\mathcal{O}_1$,
  $C(\mathcal{O}_1)^g=C(\mathcal{O}_2)$.  Thus
  $C(\mathcal{O}_1)^x=C(\mathcal{O}_1)$.  Consequently, $C_3$ is not
  an $X$-orbit, which is a contradiction.  Thus the claim is proved.    
\end{proof} 

\section{New Hypothesis}

By Lemma \ref{c=2d=m}, if $C$ is completely regular in
$H(m,q)$ with $m\geq 5$ and $\delta\geq 2$ then $|C|=2$ if and only if
$\delta=m$.  Therefore, given Propositions \ref{xcongsm} and \ref{d=m},
and Corollary \ref{2transM},  to complete the proof of
Theorem \ref{maink=1}, we only need to consider $X$-completely
transitive codes with $\delta<m$ (which is equivalent to $|C|>2$) such 
that $X\cap\base=1$, and $X$ is a $2$-transitive subgroup of $S_m$ not
containing $A_m$.  We bring this together in the following hypothesis.           

\begin{hypothesis}\label{mainhyp} Let $C$ be an $X$-completely
  transitive code in $H(m,q)$ with $|C|>2$, minimum distance $\delta$ 
  satisfying $5\leq\delta<m$, and $X\cap\base=1$ such that $\mu(X)\cong X$ 
  is $2$-transitive not containing $A_m$.   
\end{hypothesis}    
 
\begin{lemma}\label{boundq} Let $C$ be an $X$-completely transitive
  code that satisfies Hypothesis \ref{mainhyp}.  Then either (i) $q=2$, 
  $X_1^Q=S_2$ and $\delta\leq m/2$, or (ii) $q=3$, $X_1^Q=S_3$ and $8\leq m\leq 24$.
  Moreover, $X_1$ is not perfect. 
\end{lemma}

\begin{proof} Since $X$ acts $2$-transitively, it acts primitively on
  $M$, and because it does not contain $A_m$ we have that $|X|\leq
  3^m$ for $m\leq 24$, and $|X|\leq 2^m$ otherwise \cite{maroti}.  By
  Lemma \ref{restrictq}, $q^m/(m+1)\leq |X|$ from which we deduce that either $q=2$;
  $q=3$ and $m\leq 24$; or $q=4$ and $m\leq 7$.  The only binary completely regular code with $m/2<\delta<m$
  has minimum distance $4$ \cite{ngalone}.  Therefore, because $\delta\geq 5$, if $q=2$
  it follows that $\delta\leq m/2$, which also implies that $m\geq 10$.  Suppose now that $q\in\{3,4\}$.  If $m=7$, then 
  the only $2$-transitive groups $X$ (not
  containing $A_7$) are $X\cong\PSL(3,2)$ and $\AGL(1,7)$, so $|X|\leq
  168<3^7/8$, a contradiction.  If $m=6$ then
  $X\cong\PSL(2,5)$ or $\PGL(2,5)$, so $q^6/7\leq |X|\leq 120$, which
  implies that $q=3$ and $X=\PSL(2,5)$.  However this implies that
  $X_1\cong D_{10}$, which does not act as $S_3$ on $Q$, contradicting
  Proposition \ref{x12trans}.  Since $m\geq 6$, we deduce that
  $q=3$ and $8\leq m\leq 24$.  The claims about $X_1^Q$ follow from
  Proposition \ref{x12trans}.  It follows that $X_1^Q$ is soluble and,
  in particular, $X_1$ is not perfect.  
\end{proof}

\subsection{$X$ is $2$-transitive of Affine Type}\label{affinetype}

Let $C$ be a code that satisfies Hypothesis \ref{mainhyp}.  The group
$X$ acts faithfully and $2$-transitively on $M$ and so $X$ is either
of affine or almost simple type.  We consider the affine case first.         

\begin{proposition}\label{notaffine} There are no $X$-completely
  transitive codes in $H(m,q)$ satisfying Hypothesis \ref{mainhyp}
  such that $X$ is of affine type. 
\end{proposition}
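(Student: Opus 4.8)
The plan is to combine the order estimate available for affine $2$-transitive groups with Lemma~\ref{restrictq} and Lemma~\ref{boundq} to cut the problem down to a short list of pairs $(q,m)$, and then to eliminate each surviving pair using the classification of $2$-transitive groups of affine type, the fact (Lemma~\ref{boundq}) that $X_1$ has a nontrivial soluble quotient, and --- for the one stubborn pair --- Lemma~\ref{nonexistcr} together with the design carried by the minimum-weight codewords.

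First I would observe that, $X$ being $2$-transitive of affine type on $M$, we have $m=p^d$ for a prime $p$ and, after identifying $X$ with $\mu(X)$, $X\leq\AGL(d,p)$; hence $|X|\leq m\,|\GL(d,p)|<m\cdot m^{d}=m^{1+\log_p m}$. Feeding this into the bound $q^m/(m+1)\leq|X|$ of Lemma~\ref{restrictq} gives $q^m<(m+1)\,m^{1+\log_p m}$, which forces $m$ to be small; using in addition the restrictions of Lemma~\ref{boundq} ($q\in\{2,3\}$; if $q=2$ then $\delta\leq m/2$, so $m\geq10$; if $q=3$ then $8\leq m\leq24$) together with the exact value $|\AGL(1,p)|=p(p-1)$ when $d=1$, a short check through the prime powers $m$ in range leaves exactly $(q,m)\in\{(2,16),(2,32),(3,8)\}$.

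To dispose of $(2,32)$ and $(3,8)$ I would argue as follows. The point stabiliser $X_1$ is a subgroup of $\GL(d,p)$ transitive on the $p^d-1$ nonzero vectors, with $d=5$ resp.\ $d=3$ prime; by the classification of transitive linear groups --- the symplectic and $G_2$ families being excluded by the dimension, and no proper intermediate $\GL$ arising because $d$ is prime --- either $X_1\leq\GaL(1,p^d)$, which makes $|X|=p^d|X_1|$ far too small to satisfy $q^m/(m+1)\leq|X|$, or $X_1=\GL(d,p)$. But $\GL(5,2)$ and $\GL(3,2)$ are perfect, while by Lemma~\ref{boundq} the group $X_1$ surjects onto $X_1^Q\in\{S_2,S_3\}$; this contradiction rules out both pairs.

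The hard case is $(q,m)=(2,16)$, where the order and classification bookkeeping alone is not enough. Here I would first note that Lemma~\ref{boundq} gives $\delta\leq8$, Lemma~\ref{nonexistcr} excludes $\delta\in\{5,7,8\}$, and $\delta\geq5$, so $\delta=6$; and, by Remark~\ref{remfav}, that we may take $\z\in C$ and $\beta=(1^6,0^{10})\in C$. Then $C(6)$, the set of weight-$6$ codewords, forms a $3$-$(16,6,\lambda)$ design containing $\beta$, so $\lambda\geq1$ and $|C(6)|=\binom{16}{3}\lambda/\binom{6}{3}=28\lambda\geq28$; since $\z\notin C(6)$ this gives $|C|\geq29$. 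As above, $X_1\leq\GL(4,2)$ is transitive on the $15$ nonzero vectors and surjects onto $S_2$, and going through the classification for $p^d=16$ with $|X_1|=|X|/16\geq2^{16}/(16\cdot17)$ leaves only $X_1\cong\GaL(2,4)$ (so $|X|=5760$) or $X_1\cong\Sp(4,2)\cong S_6$ (so $|X|=11520$). Now the stabiliser $X_\alpha$ of $\z$ is $2$-homogeneous on $M$, hence $2$-transitive (the degree $16$ being $\not\equiv3\pmod4$), so $240\mid|X_\alpha|$; since $|X_\alpha|=|X|/|C|$ and $|C|\geq29$, the case $|X|=5760$ is impossible and the case $|X|=11520$ forces $|X_\alpha|=240$. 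Writing $V=\soc(X)\cong\F_2^4$ (so $X/V\cong S_6$), primitivity of $X_\alpha$ gives $X_\alpha\cap V\in\{1,V\}$, so either $X_\alpha/V$ is a subgroup of $S_6$ of order $15$ or $X_\alpha$ embeds in $S_6$ with index $3$ --- both impossible. This final contradiction completes the proof.
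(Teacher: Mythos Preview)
Your argument is correct and follows the paper's line closely up through the reduction to the three pairs $(q,m)\in\{(2,16),(2,32),(3,8)\}$ and the elimination of $(2,32)$ and $(3,8)$ via the perfectness of $\GL(5,2)$ and $\GL(3,2)$.

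The genuine difference is in the treatment of $(q,m)=(2,16)$ with $\delta=6$. The paper disposes of this case in one stroke by invoking the characterisation from \cite{nord} that any completely regular code in $H(16,2)$ with $\delta=6$ is equivalent to the Nordstrom--Robinson code, hence $|C|=256$; then Corollary~\ref{multtrans} forces $256\mid|X_1|$, which fails since $|X_1|$ divides $|\GL(4,2)|=2^6\cdot315$. Your route avoids this external uniqueness result: you extract $|C|\geq 29$ from the $3$-design on $C(6)$, cut $X_1$ down to $\GaL(2,4)$ or $\Sp(4,2)\cong S_6$ using the transitive linear group classification together with the non-perfectness of $X_1$, and then play the order of the $2$-transitive group $X_\alpha$ (divisible by $240$) against $|X|/|C|$, finishing with a short structural argument inside $V\rtimes S_6$. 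Each step checks out: $16\not\equiv 3\pmod 4$ legitimises the passage from $2$-homogeneous to $2$-transitive for $X_\alpha$; the normal-subgroup dichotomy $X_\alpha\cap V\in\{1,V\}$ follows from primitivity of $X_\alpha$ and faithfulness of $V$ on $M$; and $S_6$ indeed has no subgroup of order $15$ (no commuting elements of orders $3$ and $5$) and none of index $3$. The payoff of your approach is self-containment---you do not need the Nordstrom--Robinson uniqueness theorem---at the cost of a longer case analysis; the paper's approach is shorter but imports a substantial coding-theoretic result.
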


\begin{proof}  Throughout this proof we repeatedly use the
  classification of $2$-transitive groups (see \cite{camperm}).
  Suppose $C$ is an $X$-completely transitive code satisfying
  Hypothesis \ref{mainhyp} such that $X$ is of affine type.  Then
  $X=NX_1\lesssim\AGL(n,r)$ for some $n,r$ with $r$ a prime and
  $m=r^n$, and with $N$ the unique minimal normal subgroup of $X$
  of order $r^n$.  Recall also, by Lemma \ref{boundq}, 
  that either $q=2$ and $\delta\leq m/2$ (so $m\geq 10$), or $q=3$ and $8\leq m\leq 24$.  We deduce from
  Lemma \ref{restrictq}
  that \begin{equation}\label{mainafeq1}q^{r^n}\leq|X|(r^n+1)\leq|\AGL(n,r)|(r^n+1)\leq   
    r^{n^2+2n},\end{equation} and
  so \begin{equation}\label{mainafeq2}f_n(r):=\frac{r^n}{\log(r)}\leq\frac{n^2+2n}{\log(q)}.\end{equation}    
We claim that $r,n,q$ are as in one of the rows in Table
\ref{afftable}.  Suppose first that $r=2$.  If $q=3$ then
(\ref{mainafeq2}) implies that $2^n\leq (n^2+2n)(\log(2)/\log(3))$, and so
$n\leq 3$.  Furthermore, because $m=r^n\in[8,24]$ when $q=3$ it
follows that $n=3$ as in row $1$.  If $q=2$ then (\ref{mainafeq2})
implies that $10\leq 2^n<n^2+2n$, and so $n=4$ or $5$ as in row $2$. 
Suppose now that $r\geq 3$.  In this case, $f_n(r)$ is an increasing function for a fixed $n$.  
Thus (\ref{mainafeq2}) implies that \begin{equation}\label{mainafeq3}3^n\leq \frac{(n^2+2n)\log(3)}{\log(q)}\end{equation}
If $q=3$ we deduce that $n=1$.  Hence $f_1(r)\leq 3/\log(3)$, and so $r=3$ and $m=3$, which 
is a contradiction.  Thus $q=2$ with $m\geq 10$, and (\ref{mainafeq3}) implies that $n\leq 2$.  If $n=2$
then $f_2(r)\leq 8/\log(2)$, which holds only if $r=3$ (recall $r$ is a
prime), and so $m=9$, contradicting the fact $m\geq 10$.  Thus $n=1$.  Consequently
$f_1(r)\leq 3/\log(2)$, which holds only if $m=r\leq 9$, again a contradiction.  Thus the claim holds.    
\begin{table}[t]
\centering
\begin{tabular}{l|c|c|c}
\hline\hline
Row&$r$&$n$&$q$\\
\hline
1&2&3&3\\
2&2&4 or 5&2\\\hline
\end{tabular}
\caption{Possible $r,n,q$ in Affine case}
\label{afftable}
\end{table} 

Consider row $1$, so $X$ is a $2$-transitive subgroup of
$\AGL(3,2)$, and by Proposition \ref{x12trans},
$|X_1|=|X\cap\GL(3,2)|$ is even.  It follows that $X\cong \AGL(3,2)$,
but then $X_1\cong\GL(3,2)$ is perfect, contradicting Lemma
\ref{boundq}.  In row $2$, 
$m=16$ or $32$. Suppose that $m=32$.  Then
$X\lesssim \AGL(5,2)$, and as before $|X_1|$ is even.  This means that
$X\not\lesssim\GaL(1,32)$ (of order $31.5$), and hence $X_1\cong
\GL(5,2)$.  However, in this case $X_1$ is perfect contradicting
Lemma \ref{boundq}.  Thus $m=16$ and $X_1\lesssim \GL(4,2)$.  By Lemma \ref{boundq}, 
$\delta\leq 8$, and by Lemma \ref{nonexistcr} there do not exist
binary completely regular codes of length $16$ with $\delta=5,7$ or
$8$.  Thus $\delta=6$.  Any completely regular code in $H(16,2)$ 
with $\delta=6$ is equivalent to the Nordstrom-Robinson code \cite{nord}, 
which consists of $256$ codewords.  Thus $|C|=256$.  Furthermore, by Corollary
\ref{multtrans}, $X_1$ acts transitively on $C$, and so $256$ divides
$|\GL(4,2)|$, which is a contradiction.
\end{proof}

\subsection{$X$ is $2$-transitive of Almost Simple Type}\label{sk=1blkfaith}

In this section we consider codes that satisfy Hypothesis
\ref{mainhyp} such that $X$ is of almost simple type.  The group
$\Aut(\Gamma)$ has a natural action on $\Omega=Q\times M$ where
$h\sigma\in \Aut(\Gamma)$ maps $(a,i)$ to $(a^{h_i},i^\sigma)$.  It is
a consequence of Proposition \ref{x12trans}, and the fact that $X$
induces the $2$-transitive group $\mu(X)$ on $M$, that $X$ acts
transitively on $\Omega$.  In this action,
$\mathcal{B}=\{Q\times\{i\}\,:\,i \in M\}$ is a system 
of imprimitivity and $X_B=X_1$ where $B=Q\times\{1\}$, so $X_1^Q$ is
permutationally isomorphic to $X_B^B$.  Furthermore, it is a
consequence of a result by the third author with Schneider that there
exists $g\in \Aut(\Gamma)$ such that $X^g\leq X_1^Q\wr\mu(X)$
\cite{cherylembedding}.  The group $X^g$ is of almost simple type,
satisfies Hypothesis 1 of \cite{aliceblock}, and is faithful on
$\mathcal{B}$.  All groups with these properties are classified in
\cite[Thm. 1.4]{aliceblock}, and so the possibilities for $X$, $m$,
$q=|B|$ are listed in \cite[Tables 2 and 3]{aliceblock}.  However,
recall from Lemma \ref{boundq} that either $q=3$ and $8\leq m\leq 24$,
or $q=2$.  The only possibilities in \cite[Tables 2 and 3]{aliceblock}
that have $q=3$ and $8\leq m\leq 24$ are $\PSL(n,r)\lesssim X\lesssim
\PGaL(n,r)$ with $m=(r^n-1)/(r-1)$ for $(n,r)=(2,16),(3,3)$ or
$(3,4)$.  In each case $3^m/(m+1)>|X|$, contradicting Lemma
\ref{restrictq}.  Thus $q=2$, and the cases for which this holds in
\cite[Tables 2 and 3]{aliceblock}, excluding the Symmetric group case,
are as in Table \ref{blockfaith}.        
\begin{table}[t]
\centering
\begin{tabular}{l l c c}
\hline\hline
Line&$X$&$m$&Conditions\\
\hline
1&$\PGaL(2,8)$&$28$\\
2&$HS$&$176$\\
3&$Co_3$&276\\
4&$M_{11}$&11\\
5&$M_{22}\rtimes C_2$&22\\\hline
6&$\Sp(2\ell,2)$&$2^{2\ell-1}-2^{\ell-1}$&$\ell\geq 3$\\
7&$\Sp(2\ell,2)'$&$2^{2\ell-1}+2^{\ell-1}$&$\ell\geq 3$\\\hline
8&$\unrhd \Ree(r)$&$r^3+1$&$r=3^f$, $f\geq 3$ and odd\\\hline
9&$\unrhd\PSU(3,r)$&$r^3+1$&$r\geq 3$\\\hline
10&$\unrhd\PSL(n,r)$&$\frac{r^n-1}{r-1}$&$n\geq 2$ and $(n,r)\neq (2,2),\,(2,3)$\\
\hline
\end{tabular}
\caption{Possible $X$ and $m$ in Almost Simple Case}
\label{blockfaith}
\end{table}
\begin{proposition}\label{notas}  There are no $X$-completely
  transitive codes in $H(m,q)$ satisfying Hypothesis \ref{mainhyp}
  such that $X$ is of almost simple type.
\end{proposition}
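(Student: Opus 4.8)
The plan is to run through the ten lines of Table~\ref{blockfaith} one at a time. Recall from Lemma~\ref{boundq} that nothing survives in the almost simple case with $q=3$, so we may take $q=2$; thus $5\leq\delta\leq m/2$ (hence $m\geq10$) and $|C|>2$. Three facts do the work. First, the order bound $2^m\leq|X|(m+1)$ of Lemma~\ref{restrictq}. Second, $|C|\geq m+1$ (Lemma~\ref{cgeqm1}) together with the divisibility $|C|\mid|X_1|$ coming from the transitivity of $X_1$ on $C$ (Corollary~\ref{multtrans}). Third, writing $\mu$ for the homomorphism (\ref{mu}), the injectivity of $\mu|_X$ gives $|C|=[\mu(X):\mu(X_\alpha)]$, where by Proposition~\ref{ihom} the subgroup $\mu(X_\alpha)\leq\mu(X)$ is $2$-homogeneous (indeed $\lfloor(\delta-1)/2\rfloor$-homogeneous) on $M$.

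First I would eliminate the ``large'' lines using the order bound. Lines~1--3 are immediate, since $|\PGaL(2,8)|=1512$, $|HS|$, $|Co_3|$ are far below $2^m/(m+1)$ for $m=28,176,276$. For Lines~6--7 one compares $|\Sp(2\ell,2)|<2^{2\ell^2+\ell}$ with $m=2^{2\ell-1}\mp2^{\ell-1}>2\ell^2+\ell$ for $\ell\geq3$. For Lines~8--9, $|\Aut(T)|$ is bounded by a fixed polynomial in $r$ whereas $2^m=2^{r^3+1}$, so the bound already fails at the smallest admissible $r$. In Line~10, using $m\geq r^{\,n-1}$ and $m\geq10$, one checks $|\PGaL(n,r)|\,(m+1)<2^m$ for every $(n,r)$ except the short list $(n,r)\in\{(2,9),(2,11),(2,13),(2,16),(3,3),(3,4),(4,2)\}$. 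It then remains to dispose of Lines~4, 5 and these seven pairs.

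For all of them except $m=11$ I would use the index identity directly. In each relevant degree the degree is not a prime power $\equiv3\pmod4$, so $2$-homogeneity forces $2$-transitivity, and the admissible $\mu(X_\alpha)$ are pinned down by the classification of $2$-transitive groups. For $m=22$ they are $M_{22}$ and $M_{22}\rtimes C_2$, giving $|C|\leq2$, a contradiction. For Line~10, $\mu(X)$ lies in $\PGaL(n,r)$ (the graph automorphism does not act on the $m$ points) and every $2$-transitive subgroup of $\PGaL(n,r)$ on $\mathbb{P}^{n-1}(\F_r)$ contains $\PSL(n,r)$, except $A_7<A_8=\PSL(4,2)$ when $(n,r)=(4,2)$; hence $|C|\leq[\PGaL(n,r):\PSL(n,r)]=\gcd(n,r-1)f$ (with $r=p^f$), respectively $|C|\leq8$ for $(4,2)$, which in every listed case is strictly less than $m+1\leq|C|$. (The pair $(3,3)$, with $m=13$ and $\delta\in\{5,6\}$, is alternatively excluded by Lemma~\ref{nonexistcr}.) The case $m=11$ (Line~4, $\delta=5$) needs one more step since $11$ \emph{is} a prime power $\equiv3\pmod4$: the $2$-homogeneous subgroups of $M_{11}$ are $M_{11}$, $\PSL(2,11)$ and $11:5$, so $|C|\in\{12,144\}$; the Sphere Packing Bound rules out $144$ (it forces $|C|(1+11+\binom{11}{2})\leq2^{11}$, i.e.\ $|C|\leq30$), so $|C|=12$. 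Then $C(5)$ is a $2$-$(11,5,\lambda)$ design, and $5(r-1)\leq2(b-1)$ (from $\delta=5$) forces $\lambda=2$, whence $|C(5)|=11$ and $C=\{\z\}\cup C(5)$ with $C(5)$ symmetric. Distinct weight-$5$ codewords then lie at distance $6$, so $|\Gamma_5(\z)\cap C|=11\neq1=|\Gamma_5(\beta)\cap C|$ for $\beta\in C(5)$, contradicting the $0$-regularity of the completely regular code $C$.

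The main obstacle is twofold: organising the order estimates of the middle step so as to isolate exactly the finite list in Line~10, and handling $X\cong M_{11}$, where group theory alone does not suffice and one must identify the forced code $\{\z\}\cup(\text{the }2\text{-}(11,5,2)\text{ design})$ and show that it fails to be distance invariant.
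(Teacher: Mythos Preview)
Your proposal is correct and follows the same overall architecture as the paper: run through Table~\ref{blockfaith} line by line, eliminate the ``large'' lines by the order bound of Lemma~\ref{restrictq}, and kill the survivors by comparing $|C|=[X:X_\alpha]$ (with $X_\alpha$ $2$-homogeneous on $M$) against the lower bound $|C|\geq m+1$ of Lemma~\ref{cgeqm1}. The reductions for Lines~1--3, 5, 6--9, and the isolation of the finite list in Line~10 are essentially identical to the paper's.

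There are two genuine differences worth noting. First, for $(n,r)=(4,2)$ the paper does not use your $A_7$ index argument; instead it observes that $X_1\cong\AGL(3,2)$ is perfect, contradicting Lemma~\ref{boundq}. Both work; the paper's is a one-liner. Second, and more substantially, your treatment of Line~4 ($X\cong M_{11}$, $m=11$) is quite different. The paper invokes the classification result of \cite{hadpap} to conclude that $C$ is the punctured Hadamard $12$ code with $|C|=24$, and then notes that $M_{11}$ has no index-$24$ subgroup lying with index $2$ in the simple group $\PSL(2,11)$. Your argument is self-contained: you list the $2$-homogeneous subgroups of $M_{11}$, use the Sphere Packing Bound to force $|C|=12$, identify $C$ (up to equivalence) as $\{\z\}$ together with the $2$-$(11,5,2)$ biplane via the inequality $5(r-1)\leq 2(b-1)$, and then exhibit a violation of distance-invariance. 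Your route avoids the external citation at the cost of a longer combinatorial argument; the paper's is shorter but leans on \cite{hadpap}. As a minor remark, the divisibility fact $|C|\mid|X_1|$ that you list among your ``three facts'' is never actually invoked in your case analysis.
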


\begin{proof}  Throughout this proof, $C$ is an $X$-completely
  transitive code in $H(m,q)$ that satisfies Hypothesis \ref{mainhyp}
  such that $X$ is of almost simple type.  From our discussion above,
  $q=2$ and $X$, $m$ are as in one of the lines of Table
  \ref{blockfaith}.  Moreover, by Lemma \ref{boundq}, $\delta\leq m/2$ and $m\geq 10$.  
  We now consider each of the lines of Table \ref{blockfaith}, repeatedly using the classification of
  $2$-transitive groups (see \cite{camperm}).    

\underline{{\bf{Lines $1-3$:}}} In each case, $2^m/(m+1)>|X|,$
contradicting Lemma \ref{restrictq}, and so no such code 
exists.   

\underline{{\bf{Line $4$:}}}  In this case $X\cong M_{11}$ and
$m=11$.  As $\delta\leq m/2$ it follows that $\delta=5$.  
By the main result of \cite{hadpap}, $C$ is equivalent to 
the punctured Hadamard $12$ code, and so $|C|=24$.  As $X$ acts
transitively on $C$ we have that $X_\alpha$ is a subgroup of index
$24$ in $M_{11}$, and hence $X_\alpha$ is a subgroup of index $2$ in a
maximal subgroup isomorphic to $\PSL(2,11)$ (see \cite{atlas}).  However 
this contradicts the fact that $\PSL(2,11)$ is simple.     

\underline{{\bf{Line $5$:}}}  In this case $X\cong M_{22}\rtimes C_2$,
$m=22$, $X_\alpha$ is $2$-homogeneous of degree $22$ and therefore
$2$-transitive \cite{kantor}.  However, the only $2$-transitive proper
subgroup of $X$ is $M_{22}$, so $|C|\leq 2$, which is a contradiction.

\underline{{\bf{Lines $6-7$:}}}  In this case $m=2^{2{\ell}-1}\pm 2^{\ell-1}$ with $\ell\geq 3$ 
and $|X|<2^{(\ell^2+\ell)/2}$ \cite[Table 4]{akospraeg}.  However,
for $\ell\geq 3$, it holds that $m+1\leq 2^{2{\ell}-1}+2^{\ell-1}+1<2^{2\ell}$ and 
$$m-2\ell\geq 2^{2{\ell}-1}- 2^{\ell-1}-2\ell\geq 2^{2\ell-2}\geq\frac{\ell^2+\ell}{2}.$$ By Lemma \ref{restrictq}, $|X|\geq
2^m/(m+1)>2^{m-2\ell}\geq 2^{(\ell^2+\ell)/2}$, which is a
contradiction.  

\underline{{\bf{Lines $8-9$:}}}  Here $T\leq X\leq \Aut(T)$ with
$T\cong \PSU(3,r)$ or $\Ree(r)$, and $r=p^f\geq 3$ for a prime $p$ and
positive integer $f$.  In both cases $|X|\leq (r^3+1)r^3(r^2-1)f\leq
2r^{12}/(r^3+2)$.  By Lemma \ref{restrictq}, $|X|\geq
2^{r^3+1}/(r^3+2)$ and hence $r^3\log(2)\leq 12\log(r)$.  The
expression $x^3/\log(x)$ is an increasing function in $x$ for $x\geq  
e^{\frac{1}{3}}$.  As $3^3/\log(3)>12/\log(2)$ it follows that
$r^3/\log(r)>12/\log(2)$, which is a contradiction.  

\underline{{\bf{Lines $10$:}}}  Here $\PSL(n,r)\lesssim
X\lesssim\PGaL(n,r)$ with $r=p^f$ for a prime $p$ and
$m=(r^n-1)/(r-1)<r^n$.  By applying Lemma \ref{restrictq} we observe
that \begin{equation}\label{aseqn1}2^m/r^n\leq 2^m/(m+1)\leq |X|\leq
  |\PGaL(n,r)|\leq r^{n^2},\end{equation} and
so \begin{equation}\label{aseqn}g_n(r):=\frac{\frac{r^n-1}{r-1}}{\log(r)}\leq\frac{n^2+n}{\log(2)}.\end{equation} 
By first considering the case $r=2$, we deduce from
(\ref{aseqn}) that $n\leq 4$.  Now, for a fixed $n$, the
function $g_n(r)$ is increasing for $r\geq 3$.  Thus $g_n(3)\leq
(n^2+n)/\log(2)$, from which we deduce that $n\leq 3$.  
By letting $n=2$ or $3$ in (\ref{aseqn1}) and using $|\PGaL(n,r)|$ as
an upper bound, we find that $r\leq 16$ or $4$ respectively.  Recalling
that $m\geq 10$, it follows that $r,n$ are as in one of the columns of Table \ref{parameters}.  

Consider column $1$, so $X\cong\PSL(4,2)$.  In this case
$X_1\cong \AGL(3,2)$ is perfect contradicting Lemma \ref{boundq}.
Now consider column $2$, so $n=3$ and
$r\in\{3,4\}$. Consequently, 
$m=13$ or $21$.  As $\PGaL(3,r)$ is not $3$-transitive, it follows
that $X$ is not $3$-transitive, and therefore, by \cite{kantor}, is
not $3$-homogeneous.  Thus Proposition \ref{ihom} implies that
$\delta\leq 6$.  By Lemma \ref{nonexistcr}, binary completely regular
codes with these parameters for $m=13$ do not exist. Therefore
$(r,m)=(4,21)$.  Since $21$ is not a prime power it follows that
$X_\alpha$ is a $2$-transitive almost simple subgroup of $X$ and
therefore $X_\alpha$ contains $\PSL(3,4)$.  Hence $|C|=|X:X_\alpha|\leq 6$.  
However, Lemma \ref{cgeqm1} implies that $|C|\geq m+1=22$. 
Thus column $2$ does not hold.\begin{table}[t]
\centering
\begin{tabular}{l|c|c|c}
\hline\hline
Column&1&2&3\\
\hline
$r$&2&3 or 4&$\leq 16$\\
$n$&4&3&2\\
\hline
\end{tabular}
\caption{Possible $r,n$ in $\PSL(n,r)$ case}
\label{parameters}
\end{table} 
In column $3$, $n=2$ with $r\leq 16$ and $m=r+1$, and because $m\geq 10$, it follows
that $r=9, 11, 13$ or $16$.  Since $X_\alpha$ is
$2$-homogeneous, we deduce in each case that $X_\alpha$ is
$2$-transitive of degree $r+1$ \cite{kantor}.  For these values of
$r$, every $2$-transitive subgroup of degree $r+1$ of $\PGaL(2,r)$
contains $\PSL(2,r)$, and so $\PSL(2,r)\lesssim X_\alpha\leq
X\lesssim\PGaL(2,r)$.  Hence $|C|=|X:X_\alpha|$ divides 
$|X:\PSL(2,r)|$ which divides $4, 2, 2, 4$ for $r=9,11,13,16$
respectively.  However, Lemma \ref{cgeqm1} implies that $|C|\geq
m+1=r+2$, which is a contradiction in each case.      
\end{proof}

\section{Proof of Theorem \ref{maink=1}}\label{proof}  

Let $C$ be an $X$-completely transitive code in $H(m,q)$ with
$\delta\geq 5$ and $X\cap\base=1$.  Firstly suppose that $X$ does not contain $A_m$.
Furthermore, suppose that $|C|>2$, so $C$ satisfies Hypothesis
\ref{mainhyp}.  By Corollary \ref{2transM}, $X$ is $2$-transitive, so
$X$ is either of affine or almost simple type.  However, it follows from Propositions
\ref{notaffine} and \ref{notas} that no such code exists.
Thus $|C|=2$, which by Lemma \ref{c=2d=m} holds if and only if
$\delta=m$.  Therefore, by Proposition \ref{d=m}, $X\cong S_m$ which
is a contradiction.  Therefore $A_m\lesssim X$.  Consequently
Proposition \ref{xcongsm} implies that $C$ is equivalent to the binary
repetition code $\Rep(m,2)$, and that $X\cong S_m$ and $X_\alpha\cong
A_m$.  

Conversely suppose $C$ is equivalent to $\Rep(m,2)$ with $m\geq
5$.  We saw in Example \ref{exk=1} that $\Rep(m,2)$ is $X$-completely 
transitive with $X\cap\base=1$, $X\cong S_m$ and $X_\alpha\cong A_m$.
As $C$ is equivalent to $\Rep(m,2)$ there exists $y\in\Aut(\Gamma)$
such that $\Rep(m,2)^y=C$, and therefore $C$ has minimum distance
$\delta=m$.  Moreover, by Remark \ref{remfav}, $C$ is $X^y$-completely
transitive. Since $X^y\cap\base=1$ if and only if
$X\cap\base=1$, we have that $|C|\geq 2$, $X^y\cap\base=1$ and
$m=\delta\geq 5$ satisfying the required conditions of Theorem
\ref{maink=1}.


\end{document}